\newcommand{\Z}{{\mathbb Z}}
\newcommand{\C}{{\mathbb C}}
\newcommand{\N}{{\mathbb N}}
\def\<{\langle}
\def\>{\rangle}
\def\a{\alpha}
\def\b{\beta}
\def\d{\delta}
\newtheorem{thm}{Theorem}[section]
\newtheorem{prop}[thm]{Proposition}
\newtheorem{lem}[thm]{Lemma}
\newtheorem{rmk}[thm]{Remark}
\newtheorem{definition}[thm]{Definition}
\begin{document}

\makeatletter \@addtoreset{equation}{section}
\def\theequation{\thesection.\arabic{equation}}
\makeatother \makeatletter

\begin{center}
{\Large \bf   Verma modules for rank two Heisenberg-Virasoro algebra}
\end{center}

\begin{center}
{Zhiqiang Li,
Shaobin Tan$^{a}$\footnote{Partially supported by China NSF grant (Nos.11471268, 11531004).}\\
$\mbox{}^{a}$School of Mathematical Sciences, Xiamen University,
Xiamen 361005, China}
\end{center}

\begin{abstract}

Let $\preceq$ be a compatible total order on the additive group $\Z^2$, and $L$ be the rank two Heisenberg-Virasoro algebra. For any $\mathbf{c}=(c_1,c_2,c_3,c_4)\in\C^4$,
we define $\Z^2$-graded Verma module $M(\mathbf{c}, \preceq)$ for the Lie algebra $L$.  A necessary and sufficient condition for the Verma module  $M(\mathbf{c}, \preceq)$ to be  irreducible is provided.
Moreover, the maximal $\Z^2$-graded submodules of the Verma module $M(\mathbf{c}, \preceq)$ are characterized when $M(\mathbf{c}, \preceq)$ is  reducible.
\end{abstract}

\section{Introduction}

Verma modules are one of the most important research object in the study of the representation theory of infinite dimensional
Lie algebras. There are some known results about Verma modules for several graded Lie algebras closely related to the rank two Heisenberg-Virasoro algebra,
such as the twisted Heisenberg-Virasoro algebra (\cite{ACKP,B}),
the generalized Heisenberg-Virasoro algebra (\cite{SJS}),
the generalized Virasoro algebra (\cite{HWZ}) and the Virasoro-like algebra (\cite{WZ}).
In this paper, we study the Verma modules for the rank two Heisenberg-Virasoro algebra $L$.

The Lie algebra $L$, studied in this paper, can be viewed as a generalization of the twisted Heisenberg-Virasoro algebra
(see \cite{XLT,TWX} for details). The structure and representation theory for
the twisted Heisenberg-Virasoro algebra have been studied in  \cite{ACKP,B,JJ,SJ}. The Verma module structure has been characterized
in \cite{ACKP} and \cite{B}. In this paper, we consider the Verma modules for the rank two Heisenberg-Virasoro algebra $L$.
 The derivations, automorphism group and central extension for $L$ were studied in \cite{XLT}. In \cite{JL}, the authors classified
all irreducible $\Z^2$-graded $L$-module under the assumption that the action of the torus is associative.
Furthermore, in \cite{GL2}, it was proved that this assumption is equivalent to
that the $\Z^2$-graded $L$-module is uniformly bounded and the torus acts non-trivially.
The universal Whittaker modules for $L$ were discussed in \cite{TWX}, where the irreducibility of the universal Whittaker modules was determined.
 Recently, certain relationship between the restricted module categories for the Lie algebra $L$ and  module categories of
the corresponding vertex operator algebra over $L$ has been established in \cite{GW}.

The paper is organized as follows. In Section $2$, we first recall the definition of rank two Heisenberg-Virasoro algebra $L$, and the notion of
compatible total order $\preceq$ (see \cite{HWZ,WZ}) over an additive group. We will see that the Lie algebra $L$ (see Definition $2.1$) has a natural $\Z^{2}$-gradation
$$L=\oplus_{\alpha\in\Z^{2}}L_{\alpha},\ L_{0}=\sum\limits_{i=1}^{4}\C K_{i},\
L_{\alpha}=\C t^{\alpha}\oplus\C E(\alpha),$$for $\alpha\in \Z^{2}\setminus\{(0,0)\}$.
We fix a compatible total order $\preceq$ on $\Z^{2}$, then we have a triangular decomposition for the Lie algebra $L$, and consequently we are able to
define a $\Z^{2}$-graded highest weight modules (called Verma modules) $M(\mathbf{c}, \preceq)$, where
$\mathbf{c}=(c_{1}, c_{2}, c_{3}, c_{4})\in \C^{4}$. In Section $3$, we give a necessary and sufficient condition
for the Verma module $M(\mathbf{c}, \preceq)$ to be an irreducible $L$-module.  We remark that our method is different from that given in [WZ] for Virasoro-like algebra. Finally, we determine the maximal $\Z^2$-graded submodules for the Verma module $M(\mathbf{c}, \preceq)$ whenever $M(\mathbf{c}, \preceq)$ is reducible. The arguments are divided into two cases according to the compatible total order $\preceq$ on the group $\Z^2$ being dense or discrete.

We denote by $\C$, $\Z$, $\N$ and
$\Z_{+}$ the sets of all complex numbers, integers, positive integers and
nonnegative integers, respectively. All vector spaces mentioned in
this paper are over the complex field $\C$. For any Lie algebra $\mathfrak{g}$, the universal enveloping algebra of $\mathfrak{g}$
is denoted by $\mathcal{U}(\mathfrak{g})$.

\section{Verma modules and dense orders}

If $G$ is an additive group, we say a total order $\preceq$ on $G$ is {\it compatible} if
$\a\preceq\b$ implies $\a+\gamma\preceq \b+\gamma$ for every $\gamma\in G$. By $\a\prec\b$ we mean that $\a\preceq\b$ and $\a\not=\b$, and by $\a\succeq\b,\ \a\succ\b$ we mean that $\b\preceq\a,\ \b\prec\a$ respectively. Moreover,
 For any
$\underline{\a}=(\a_{1}, \cdots, \a_{r})$ and $\underline{\b}=(\b_{1}, \cdots, \b_{r})$,
where $\a_{i}$, $\b_{i}\in G$, $1\leq i\leq r$, $r\in\N$, we define
$\underline{\a}\succ\underline{\b}$ if and only if there exists some $1\leq s\leq r$ such that $\a_{s}\succ\b_{s}$ and
$\a_{i}=\b_{i}$ for any $i>s$. Similarly, one defines $\underline{\a}\prec\underline{\b}$ if and only if $\underline{\b}\succ\underline{\a}$. More generally, for any $\underline{\alpha}
=(\alpha_{1}, \alpha_{2},\cdots, \alpha_{n})$ and $\underline{\beta}=
(\beta_{1}, \beta_{2},\cdots, \beta_{m})$ with $m>n$, we define $\underline{\beta}\succ\underline{\alpha}$ (respectively $\underline{\beta}\prec\underline{\alpha}$) if and only if $
\underline{\beta}\succ(0, \underline{\alpha})$ (respectively $\underline{\beta}\prec(0, \underline{\alpha})$),
where the number of $0$'s (the identity element in the additive group $G$) is equal to $m-n$. Let
$$
G_{+}=\{\a\in G\mid\a\succ 0\},\ \ G_{-}=\{\a\in G\mid\a\prec 0\}.
$$
Then $G=G_{+}\cup\{0\}\cup G_{-}$.  For any $\a\in G_{+}$, let $B(\a)$ denote the set $\{\b\in G_+\mid \b\prec\a\}$
and let $D(\a)$ denote the set $\{\b\in G_{+}\mid m\b=\a$, for some $m\in \N\}$.
Clearly the set $D(\a)$ is a finite set, and we denote the minimal element in $D(\a)$ with
respect to the order $\preceq$ by $\mu(\a)$.
We say that the order $\preceq$ is \textbf{dense} if for any
$\alpha\in G_{+}$ there exists some $\beta\in G_{+}$ such that $\beta\prec \alpha$ and that
$\preceq$ is \textbf{discrete} if there exists a smallest positive element, say $\epsilon$, in $G$, i.e.,
$\epsilon\prec \alpha$ for any other $\alpha\in G_{+}$.
Obviously the order $\preceq$ is dense if and only if $B(\a)$ is an infinite set for any $\a\in G_{+}$, and the order $\preceq$
is discrete if and only if there exists some $\epsilon\in G_{+}$, such that $B(\epsilon)=\emptyset$.

From now on, we let $G$ be the additive group $\Z^{2}$, and  fix a compatible total order $\preceq$ on the group $G$. Whenever the order $\preceq$ is discrete,
we denote the smallest positive element in $G$ by $\epsilon=(\epsilon(1),\epsilon(2))\in\Z^2$.

For any element $\alpha\in G$, we denote
$\alpha=(\alpha(1),\alpha(2))$, and we set
$\text{det}{\beta \choose \alpha}=\beta(1)\alpha(2)-\alpha(1)\beta(2)$ for $\alpha$, $\beta\in G$. Now we recall the definition of the rank two Heisenberg-Virasoro algebra $L$.

\begin{definition}\label{defHVart}
{\em The {\em rank two Heisenberg-Virasoro algebra} is a Lie algebra spanned by elements of the form
\[\{t^{\alpha}, E(\alpha), K_{i}\mid\alpha\in G\setminus\{(0,0)\},\ \
i=1, 2, 3, 4\}\]} with relations defined by
\[[t^{\alpha}, t^{\beta}]=0,\ [K_{i}, L]=0, \ \ i=1, 2, 3, 4;\]
\[[t^{\alpha}, E(\beta)]=\text{det}{\beta \choose \alpha}
t^{\alpha+\beta}+\delta_{\alpha+\beta, 0}h(\alpha);\]
\[[E(\alpha), E(\beta)]=\text{det}{\beta \choose \alpha}
E(\alpha+\beta)+\delta_{\alpha+\beta, 0}f(\alpha),\]
where $h(\alpha)=\alpha(1)K_{1}+\alpha(2)K_{2}$, $f(\alpha)=\alpha(1)K_{3}+\alpha(2)K_{4}$.
\end{definition}

We notice that the Virasoro-like algebra ([GL1], [WZ]) is a Lie subalgebra of the rank two Heisenberg-Virasoro algebra $L$ spanned by the subset
$\{E(\alpha),\ K_{3},\ K_{4}\mid \alpha\in G\setminus\{(0, 0)\}\}$.

Set $L_{\pm}=\sum\limits_{\pm\alpha\in G_{+}} (\C t^{\alpha}\oplus \C E(\alpha))$
and $L_{0}=\sum\limits_{i=1}^{4}\C K_{i}$.
Then we have the following triangular decomposition of $L$:
$$L=L_{-}\oplus L_{0}\oplus L_{+}.$$
Set $L^{t}_{\pm}=\sum_{\pm\alpha\in G_{+}}\C t^{\alpha}$, $L^{E}_{\pm}=\sum_{\pm\alpha\in G_{+}}\C E(\alpha)$,
then, $L_{\pm}=L^{t}_{\pm}\oplus L^{E}_{\pm}$.
By PBW theorem, the universal enveloping algebra $\mathcal{U}=\mathcal{U}(L)$ of $L$ can be factored as follows
$$\mathcal{U}(L)=\mathcal{U}(L_{-})\mathcal{U}(L_{0}) \mathcal{U}(L_{+}).$$
Now we define the Verma modules for $L$ with respect to the above decomposition. Let
$\C v_L$ be the one dimensional module of the Lie subalgebra $L_{0}\oplus L_{+}$, which is defined
by $L_{+}.v_L=0$ and $K_{i}.v_L=c_{i}v_L$, where $c_{i}\in \C$ for $i=1, 2, 3, 4$.
Denote $\mathbf{c}=(c_{1}, c_{2}, c_{3}, c_{4})\in \C^{4}$. Then we define the
Verma module for $L$ \begin{eqnarray}M(\mathbf{c}, \preceq)=\mathcal{U}(L)\otimes_{\mathcal{U}(L_{0}\oplus L_{+})}\C v_L.\end{eqnarray}

From PBW theorem, we know $M(\mathbf{c}, \preceq)\simeq\mathcal{U}(L_-)\otimes\C v_L$ as vector spaces.
For any $\a\in G_{-}$, let $M(\mathbf{c}, \preceq)_{\a}$ denote the vector space
spanned by elements of the form
$$t^{-\alpha_{1}}t^{-\alpha_{2}}\cdots t^{-\alpha_{m}}E(-\beta_{1})E(-\beta_{2})\cdots E(-\beta_{n})v_L,$$
where $m, n\in \Z_{+}$, $\alpha_{i}, \beta_{j}\in G$,
$0\prec\alpha_{1}\preceq\alpha_{2}\preceq\cdots \preceq \alpha_{m}$, $0\prec\beta_{1}\preceq\beta_{2}\preceq
\cdots \preceq\beta_{n}$ and such that $\sum\limits_{i=1}^{m}\alpha_{i}+\sum\limits_{j=1}^{n}\beta_{j}=-\a$.
Let $M(\mathbf{c}, \preceq)_{0}=\C v_L$. It is clear that
$M(\mathbf{c}, \preceq)=\sum\limits_{\a\preceq0}M(\mathbf{c}, \preceq)_{\a}$ is a $\Z^2$-graded $L$-module.
Now we give two lemmas which will be used in Section $3$.
\begin{lem}
Suppose that the compatible total order $\preceq$ on $G$ is dense. Then

(1). For any $\a$, $\b\in G_{+}$, one can choose an element $\a_{1}\in B(\a)$ such that
$\text{det}{\b \choose \gamma}\neq 0$ for all $\gamma\in B(\a_{1})$.

(2). The order $\preceq$ on $G$ is standard in the sense that for any $\a$, $\b\in G_{+}$, there
exists a positive integer $n$ such that $n\a\succ\b$.

(3). For any $0\prec\a\prec\b$, there exists some $\gamma\in G$ such that $\a\prec\gamma\prec\b$ and $\text{det}{\b \choose\gamma}\neq0$.
\end{lem}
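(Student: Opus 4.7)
The unifying observation is that $\det{\beta\choose\gamma}=0$ precisely when $\gamma$ is $\Q$-proportional to $\beta$, i.e.\ when $\gamma\in\Z\mu(\beta)$; hence the ``forbidden'' elements of $G_+$ are the positive multiples $\mu(\beta),2\mu(\beta),3\mu(\beta),\ldots$, whose smallest member is $\mu(\beta)$ itself. This immediately settles (1): it suffices to choose $\alpha_1\in G_+$ with $\alpha_1\preceq\mu(\beta)$ and $\alpha_1\prec\alpha$, so that $B(\alpha_1)$ contains none of the forbidden multiples. If $\alpha\preceq\mu(\beta)$, density provides $\alpha_1\prec\alpha$, which is then automatically $\prec\mu(\beta)$; if $\alpha\succ\mu(\beta)$, density provides $\alpha_1\prec\mu(\beta)\prec\alpha$. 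Either way $\alpha_1$ exists.

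For (2) I would argue by contradiction. Suppose $n\alpha\preceq\beta$ for every $n\in\N$, and set
\[H=\{g\in G\mid -n\alpha\prec g\prec n\alpha\text{ for some }n\in\N\}.\]
A triangle-inequality-style check shows $H$ is a subgroup of $G$; by construction it is convex. It contains $\alpha$ but not $\beta$, so $0\neq H\neq G$. Now a convex subgroup induces a compatible total order on the quotient $G/H$, and no non-trivial \emph{finite} abelian group admits such an order (positive elements have arbitrarily many positive multiples which must stay positive), so $H$ cannot have finite index in $\Z^2$. Hence $H$ has rank one, $H=\Z\gamma$ for some primitive $\gamma\in G_+$. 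Convexity then forces every element of $G_+$ either to lie in $H$ (hence to be a positive multiple of $\gamma$, so $\succeq\gamma$) or to exceed every $n\gamma$ (so $\succ\gamma$). Thus $\gamma$ would be a smallest positive element of $G$, contradicting density.

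For (3) I would combine the forbidden-set picture with density. Write $\beta=d\mu(\beta)$; the positive multiples $m\mu(\beta)$ strictly between $0$ and $\beta$ are precisely those with $1\leq m\leq d-1$, a finite set, so at most finitely many elements of the interval $(\alpha,\beta)$ are proportional to $\beta$. On the other hand $(\alpha,\beta)$ is infinite: the translation $g\mapsto g-\alpha$ gives a bijection $(\alpha,\beta)\leftrightarrow B(\beta-\alpha)$, and density makes $B(\beta-\alpha)$ infinite. So any $\gamma$ in the nonempty complement does the job.

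The main obstacle is (2): while (1) and (3) are essentially counting arguments built from density together with the ``forbidden set'' picture, part (2) requires the little piece of ordered-group theory spelling out that a proper convex subgroup of $\Z^2$ must be cyclic of rank one, and that its primitive generator then becomes a smallest positive element of $G$. Once that structural input is in place, the rest falls out cleanly.
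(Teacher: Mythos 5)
Your argument is correct, but it is organized quite differently from the paper's. For (1) and (2) the paper gives no proof at all --- it simply cites Lemma 2.1 of \cite{WZ} --- so your direct proofs are genuine additions: for (1), the observation that the positive elements $\gamma$ with $\text{det}{\b \choose \gamma}=0$ are exactly $\N\mu(\b)$, whose $\preceq$-minimum is $\mu(\b)$ itself, so any $\a_1\in B(\a)$ with $\a_1\preceq\mu(\b)$ works; for (2), the convex (Archimedean) subgroup $H$ generated by $\a$, where finite index is excluded because a compatibly ordered quotient cannot have torsion, and rank one is excluded because the positive generator of $H$ would then be a smallest positive element of $G$. Both are sound; the only points worth making explicit are that $\mu(\b)$ is the primitive vector in the direction of $\b$ (this is what identifies the ``forbidden set'' in $G_+$ as exactly $\N\mu(\b)$) and that in (2) one takes the generator of $H$ lying in $G_+$. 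For (3) the paper argues by sandwiching: it places $\a$ between consecutive multiples, $n\mu(\b)\prec\a\preceq(n+1)\mu(\b)$, and chooses $\gamma$ strictly between $\a$ and the next multiple of $\mu(\b)$ that is still $\preceq\b$, so that $\gamma$ meets no multiple of $\mu(\b)$ by construction. Your counting argument --- the interval $\{\gamma\mid\a\prec\gamma\prec\b\}$ is infinite by translation onto $B(\b-\a)$, while only the finitely many multiples $m\mu(\b)\prec\b$ must be avoided --- reaches the same conclusion, is arguably cleaner, and shows in addition that infinitely many admissible $\gamma$ exist. In short, the paper's route buys brevity by outsourcing (1)--(2) to \cite{WZ}; yours buys self-containedness.
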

\begin{proof}
The statements $(1)$ and $(2)$ are due to the Lemma $2.1$ in \cite{WZ}. We only need to prove $(3)$. From $(2)$,
we know that there exists a unique $n\in \Z_{+}$ such that $n\mu(\b)\prec\a\preceq (n+1)\mu(\b)$.
If $\a=(n+1)\mu(\b)$, then $\a\prec (n+2)\mu(\b)\preceq\b$. Now for any $\a\prec\gamma\prec (n+2)\mu(\b)$, we have
$\a\prec\gamma\prec\b$ and $\text{det}{\b\choose\gamma}\neq0$. If $n\mu(\b)\prec\a\prec (n+1)\mu(\b)$, then it is obvious that
$\b\succeq(n+1)\mu(\b)$. Thus for any $\gamma\in G$ such that $\a\prec\gamma\prec (n+1)\mu(\b)$, we have that
$\a\prec\gamma\prec\b$ and $\text{det}{\b\choose\gamma}\neq0$.
\end{proof}

\begin{lem}
Suppose that the compatible total order $\preceq$ on $G$ is discrete. Then

(1). There exists $\epsilon^{\prime}\succ 0$ such that $\{\epsilon,\ \epsilon^{\prime}\}$
forms a $\Z$-basis of $G$.

(2). If $0\preceq \a\prec\b$ such that $(\b-\a)\notin \Z \epsilon$, then $\b\succ \a+n\epsilon$ for any $n\in \N$.
\end{lem}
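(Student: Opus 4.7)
The plan is to prove the two parts in turn, both hinging on the extremal property of $\epsilon$.

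For part (1), I first show $\epsilon$ is primitive in $\Z^2$, i.e.\ $\gcd(\epsilon(1),\epsilon(2))=1$. Suppose $\epsilon=d\eta$ with $d\geq 2$ and $\eta\in\Z^2$. Since $d\eta=\epsilon\succ 0$, compatibility of $\preceq$ rules out $\eta\prec 0$ (which would give $d\eta\prec 0$) and $\eta=0$, forcing $\eta\succ 0$. But then $0\prec \eta\prec 2\eta\prec\cdots\prec d\eta=\epsilon$ contradicts the minimality of $\epsilon$ in $G_+$. Primitivity of $\epsilon$ then guarantees, by elementary lattice theory (Bezout's identity), the existence of $\xi\in\Z^2$ with $\{\epsilon,\xi\}$ a $\Z$-basis of $\Z^2$. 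Exactly one of $\xi,-\xi$ lies in $G_+$, and that element is the required $\epsilon'$; note that $\{\epsilon,-\xi\}$ is still a $\Z$-basis.

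For part (2), the key observation is a gap property: no element of $G$ lies strictly between $n\epsilon$ and $(n+1)\epsilon$ for any $n\in\Z$. Indeed, if $n\epsilon\prec x\prec (n+1)\epsilon$, then $0\prec x-n\epsilon\prec \epsilon$ would contradict the minimality of $\epsilon$. Now assume $0\preceq \a\prec \b$ with $\b-\a\notin\Z\epsilon$, and suppose for contradiction that $\b\preceq \a+n\epsilon$ for some $n\in\N$. Since $\b-\a\succ 0$ and $\b-\a$ is not an integer multiple of $\epsilon$, we have $\b-\a\prec n\epsilon$ strictly; choose the largest $m\in\Z_+$ with $m\epsilon\preceq \b-\a$, which exists and satisfies $m\leq n-1$. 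Because $\b-\a\neq m\epsilon$ and $\b-\a\neq (m+1)\epsilon$, the inequalities are strict: $m\epsilon\prec \b-\a\prec (m+1)\epsilon$, violating the gap property.

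The main obstacle I anticipate is the primitivity step in part (1), since one must carefully use the compatibility axiom both to locate $\eta$ in $G_+$ and to iterate to produce a positive element smaller than $\epsilon$. Once primitivity is in hand, the basis construction in (1) and the entire argument of (2) reduce to short formal manipulations built around the gap property, which in turn is an immediate consequence of the minimality of $\epsilon$.
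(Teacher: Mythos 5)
Your proposal is correct, and it is somewhat more self-contained than the paper's treatment. For part (1) the paper simply cites Lemma 2.1 of [GL1], whereas you give a complete elementary argument: the minimality of $\epsilon$ forces it to be primitive in $\Z^2$ (any factorization $\epsilon=d\eta$ with $d\geq 2$ would put a positive element $\eta$ strictly below $\epsilon$), and primitivity plus Bezout yields a basis $\{\epsilon,\xi\}$, with the sign of $\xi$ adjusted to land in $G_+$. This is exactly the content of the cited lemma, so nothing is lost and the reader gains a proof. For part (2) the paper argues directly: $\b-\a\succ 0$ and $\b-\a\notin\Z\epsilon$ give $\b-\a\succ\epsilon$, hence $\b-\a-\epsilon\succ 0$ and still outside $\Z\epsilon$, so $\b-\a\succ 2\epsilon$, and induction finishes. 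You instead isolate a gap property (no element of $G$ lies strictly between consecutive multiples of $\epsilon$) and derive a contradiction from a maximal $m$ with $m\epsilon\preceq\b-\a$. The two arguments rest on the identical observation---minimality of $\epsilon$---and are logically interchangeable; the paper's induction is marginally shorter, while your gap-property formulation makes the structural reason for the claim more explicit. All the individual steps in your write-up (existence and boundedness of the maximal $m$, strictness of the inequalities, the sign analysis of $\eta$ and $\xi$) check out.
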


\begin{proof}
The statement $(1)$ is from the Lemma $2.1$ in \cite{GL1}. Now we prove (2). Since $\b-\a\succ0$ and $(\b-\a)\notin \Z \epsilon$, we get that $\b-\a\succ\epsilon$.
Thus $\b-\a-\epsilon\succ0$. Together with $(\b-\a)\notin \Z \epsilon$, we get $\b-\a\succ2\epsilon$.
By induction, we get $\b\succ \a+n\epsilon$ for any $n\in \N$.
\end{proof}

\begin{rmk}

(1). When the compatible total order $\preceq$ is dense, then dim $M(\mathbf{c}, \preceq)_{\a}=\infty$ for any $\a\in G_{-}$.

(2). When the compatible total order $\preceq$ is discrete, then dim $M(\mathbf{c}, \preceq)_{\a}=\infty$ for any $\a\in G_{-}\setminus\Z\epsilon$.\\
In fact, since the set $B(-\alpha)$ is infinite if the order $\preceq$ is dense, then for any $\gamma\in G_+$ such that
$\gamma\prec\alpha$, we have $t^{-\gamma}E(\alpha+\gamma)v_L\in M(\mathbf{c}, \preceq)_{\a}$. And When the order $\preceq$ is discrete,
we have $-\alpha-n\epsilon\in G_+$ and $t^{-n\epsilon}E(\alpha+n\epsilon)v_L\in M(\mathbf{c}, \preceq)_{\a}$ for $n\in\N$.

\end{rmk}

Now if the order is discrete, we assume $\{\epsilon,\ \epsilon^{\prime}\}$ forms a $\Z$-basis of $G$, where $\epsilon^{\prime}\succ 0$.
For later convenience and use, we adopt the symbol $\alpha\succ\Z \epsilon$ to mean that $\alpha\in G_{+}\setminus\Z \epsilon$, and we set that $T_{\epsilon}=\sum_{k\in \N}\C t^{-k\epsilon}$,
$E_{\epsilon}=\sum_{k\in \N}\C E(-k\epsilon)$.

\section{Irreducibility of Verma modules}
In this section, we shall determine the necessary and sufficient conditions
for the Verma modules $M(\mathbf{c}, \preceq)$ to be irreducible. And when
the Verma module $M(\mathbf{c}, \preceq)$ is reducible, we get its maximal submodule.

Now we give some definitions that will used later. For $0\neq u\in M(\mathbf{c}, \preceq)$, we set
$$u=\sum\limits_{i=1}^{m}\sum\limits_{j=1}^{n}a_{ij}t^{-\alpha_{i_{1}}}
t^{-\alpha_{i_{2}}}\cdots t^{-\alpha_{i_{k_{i}}}}E(\beta_{j_{1}})E(\beta_{j_{2}})\cdots E(\beta_{j_{d_{j}}})v_L,$$
where $0\neq a_{ij}\in\C$, $0\prec\alpha_{i_{1}}\preceq\alpha_{i_{2}}\preceq\cdots\preceq\alpha_{i_{k_{i}}}$,
$0\prec\beta_{j_{1}}\preceq\beta_{j_{2}}\preceq\cdots\preceq\beta_{j_{d_{j}}}$ for $1\leq i\leq m$, $1\leq j\leq n$.
We define \[J(u)=\{\alpha_{i_{s}}\mid1\leq i\leq m,\ 1\leq s\leq k_{i}\},\ \ \
r(u)=\text{max}\{d_{j}\mid1\leq j\leq n\}.\]Note that if $0\neq u\in\mathcal{U}(L^{E}_{-})v_L$, $J(u)=\emptyset$;
and if $0\neq u\in\mathcal{U}(L^{t}_{-})v_L$, $r(u)=0$. Suppose $r(u)=n\in \N$. We can write
\begin{eqnarray}u=\sum_{i=1}^{m}x_{i}E(-\beta_{i_{1}})E(-\beta_{i_{2}})\cdots E(-\beta_{i_{n}})v_L+u^{\prime},\end{eqnarray}
where $0\neq x_{i}\in \mathcal{U}(L^{t}_{-})$, $0\prec\beta_{i_{1}}\preceq\beta_{i_{2}}\preceq\cdots\preceq\beta_{i_{n}}$
for $1\leq i\leq m$ and $u^{\prime}\in M(\mathbf{c}, \preceq)$ such that $r(u^{\prime})<n$ or $u^{\prime}=0$.
For the expression in $(3.1)$, set $\underline{\beta_{i}}=(\beta_{i_{1}}, \beta_{i_{2}},\cdots, \beta_{i_{n}})$ and we define \[I(u)=\{\underline{\beta_{i}}\mid1\leq i\leq m\}.\]
The following two lemmas play critical role in the proving of the irreducibility of $M(\mathbf{c}, \preceq)$ under
the condition that the order $\preceq$ on $G$ is dense.

\begin{lem}
When the compatible total order $\preceq$ on $G$ is dense,
for any $u\in M(\mathbf{c}, \preceq)$ such that $r(u)=m>1$, then there is an element
$u^{\prime}\in\mathcal{U}(L)u$, such that $r(u^{\prime})=1$.
\end{lem}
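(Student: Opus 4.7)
The plan is to act on $u$ by a carefully chosen $t^\alpha$ with $\alpha\in G_+$ to drop the rank $r$ by exactly one, and then iterate $m-1$ times to obtain $u'\in\mathcal{U}(L)u$ with $r(u')=1$. Writing $u$ as in $(3.1)$,
\[
u=\sum_{i=1}^{N}x_i\,E(-\beta_{i_1})\cdots E(-\beta_{i_m})v_L + u',\qquad r(u')<m,
\]
with nonzero $x_i\in\mathcal{U}(L^t_-)$ and distinct sorted tuples $\underline{\beta_i}$, I would use density of $\preceq$ together with finitely many applications of Lemma~$2.2(1)$ to choose $\alpha\in G_+$ satisfying (a) $\alpha\prec\beta_{i_k}$ for every $\beta_{i_k}$ in $u$, (b) $\text{det}{\beta_{i_k}\choose\alpha}\neq 0$ for every such $\beta_{i_k}$, and (c) $\beta_{i_k}-\alpha\notin J(u)$ for every such $\beta_{i_k}$. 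Conditions (b)--(c) each exclude only finitely many values of $\alpha$, while (a) combined with Lemma~$2.2(1)$ guarantees ample room below every $\beta_{i_k}$.

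Under (a) the bracket $[t^\alpha,E(-\beta_{i_k})]$ reduces to $\text{det}{-\beta_{i_k}\choose\alpha}\,t^{\alpha-\beta_{i_k}}\in L_-$ with no central contribution. Since $t^\alpha v_L=0$ and $t^\alpha$ commutes with each $x_i$, a direct expansion gives $t^\alpha u=\Phi(\alpha)+R$ with $r(R)\le m-2$, where
\[
\Phi(\alpha):=\sum_{i=1}^{N}\sum_{k=1}^{m}\text{det}{-\beta_{i_k}\choose\alpha}\,x_i\,t^{\alpha-\beta_{i_k}}\,E(-\beta_{i_1})\cdots\widehat{E(-\beta_{i_k})}\cdots E(-\beta_{i_m})\,v_L.
\]
Here $R$ collects the terms from pulling each $t^{\alpha-\beta_{i_k}}$ past the remaining $E$-factors and from $t^\alpha u'$. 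So $r(t^\alpha u)\le m-1$, and the task reduces to showing $\Phi(\alpha)\neq 0$.

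The central step is a projection argument. Fix any $\beta^*$ appearing in some $\underline{\beta_i}$, and let $W\subset M(\mathbf{c},\preceq)$ be the span of PBW basis vectors whose $t$-tuple contains the entry $\beta^*-\alpha$. By (c), $\beta^*-\alpha\notin J(u)$, so the variable $t^{-(\beta^*-\alpha)}$ does not appear in any $x_i$; therefore only the pairs $(i,k)$ with $\beta_{i_k}=\beta^*$ contribute to the $W$-projection:
\[
\pi_W\Phi(\alpha)=\text{det}{-\beta^*\choose\alpha}\,t^{\alpha-\beta^*}\sum_{i:\,\beta^*\in\underline{\beta_i}}r_i\,x_i\,E\bigl(-(\underline{\beta_i}\setminus\beta^*)\bigr)v_L,
\]
where $r_i\ge 1$ denotes the multiplicity of $\beta^*$ in $\underline{\beta_i}$. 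Since inserting $\beta^*$ into a sorted tuple is a unique operation, distinct $\underline{\beta_i}$ containing $\beta^*$ yield distinct $(m-1)$-tuples $\underline{\beta_i}\setminus\beta^*$; the inner summands thus have distinct $E$-tuples and are linearly independent. Combined with $x_i\neq 0$ and $\text{det}{-\beta^*\choose\alpha}\neq 0$, this forces $\pi_W\Phi(\alpha)\neq 0$, hence $\Phi(\alpha)\neq 0$ and $r(t^\alpha u)=m-1$.

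Iterating the construction $m-1$ times, with a new $\alpha$ adapted to the current element at each stage, produces the desired $u'\in\mathcal{U}(L)u$ with $r(u')=1$. The main obstacle is preventing $\Phi(\alpha)$ from collapsing through cancellations; condition (c) is the crucial device, making $t^{-(\beta^*-\alpha)}$ a genuinely new $t$-variable that isolates the pairs $(i,k)$ with $\beta_{i_k}=\beta^*$ and rules out interference from pairs with $\beta_{i_k}\neq\beta^*$.
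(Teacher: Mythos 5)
Your proposal is correct and follows essentially the same strategy as the paper: act by $t^{\alpha}$ for a small, generic $\alpha\in G_{+}$ chosen (via density and Lemma $2.2(1)$) so that $\alpha$ lies below all the $\beta_{i_k}$, the relevant determinants are nonzero, and each $\beta_{i_k}-\alpha$ avoids $J(u)$, then observe that the rank drops by exactly one and iterate. The only difference is cosmetic: where the paper orders the tuples $\underline{\beta_i}$ and isolates the single term coming from the maximal tuple, you certify non-vanishing by projecting onto the monomials containing the fresh variable $t^{-(\beta^{*}-\alpha)}$ and using distinctness of the reduced tuples $\underline{\beta_i}\setminus\beta^{*}$ --- a slightly more symmetric bookkeeping of the same leading-term argument.
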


\begin{proof}
Write $u=\bar{u}+u_{0}$, where
\[\bar{u}=\sum_{i=1}^{k}\sum_{j=1}^{d}b_{ij}t^{-\alpha_{i_{1}}}
t^{-\alpha_{i_{2}}}\cdots t^{-\alpha_{i_{s_{i}}}}E(-\beta_{j_{1}})E(-\beta_{j_{2}})\cdots E(-\beta_{j_{m}})v_L,\]
$0\neq b_{ij}\in\C$, $0\prec\alpha_{i_{1}}\preceq\alpha_{i_{2}}\preceq\cdots\preceq\alpha_{i_{s_{i}}}$, $0\prec\beta_{j_{1}}\preceq\beta_{j_{2}}\preceq\cdots\preceq\beta_{j_{m}}$,
for $1\leq i\leq k$, $1\leq j\leq d$, and $u_{0}\in M(\mathbf{c}, \preceq)$ such that $r(u_{0})<m$ or $u_{0}=0$.
We have \[I(u)=\{\underline{\beta_{j}}=(\beta_{j_{1}}, \beta_{j_{2}},\cdots, \beta_{j_{m}})\mid1\leq j\leq d\}.\]
Without loss of generality, we can assume $\underline{\beta_{1}}\succeq\underline{\beta_{2}}\succeq\cdots\succeq\underline{\beta_{d}}$. Let $a$
be the number of the element $\b_{1_1}$ occurring in the sequence $\underline{\beta_{1}}$. Set
$$\gamma=\text{min}\{\beta_{j_{1}}\mid1\leq j\leq d\}.$$
Since $B(\gamma)\cap B(\mu(\beta_{1_{1}}))=B(\text{min}\{\gamma,\ \mu(\beta_{1_{1}})\})$ is an infinite set, we can
always take some $\gamma^{\prime}\in B(\gamma)\cap B(\mu(\beta_{1_{1}}))$ such that
$$\beta_{1_{1}}-\gamma^{\prime}\notin J(\bar{u}),$$
where $J(\bar{u})=\{\alpha_{i_{j}}\mid1\leq i\leq k,\ 1\leq j\leq s_{i}\}$ is a finite set.
Since $\gamma^{\prime}\prec \mu(\beta_{1_{1}})$, $\text{det}{-\beta_{1_{1}} \choose \gamma^{\prime}}\neq0$.
Consider $t_{\gamma^{\prime}}.u$, and we have \[t_{\gamma^{\prime}}.\bar{u}=\sum_{i=1}^{k}a\text{det}{-\beta_{1_{1}} \choose \gamma^{\prime}}b_{i1}t^{-\alpha_{i_{1}}}
t^{-\alpha_{i_{2}}}\cdots t^{-\alpha_{i_{s_{i}}}}t^{-\beta_{1_{1}}+
\gamma^{\prime}}E(-\beta_{1_{2}})\cdots E(-\beta_{1_{m}})v_L+A,\]for some $A\in M(\mathbf{c}, \preceq)$.
Notice that $\beta_{1_{1}}-\gamma^{\prime}\notin J(\bar{u})$
and $\underline{\beta_{1}}\succeq\underline{\beta_{2}}\succeq\cdots\succeq\underline{\beta_{d}}$, one can deduce that
$$\sum_{i=1}^{k}a\text{det}{-\beta_{1_{1}} \choose \gamma^{\prime}}b_{i1}t^{-\alpha_{i_{1}}}
t^{-\alpha_{i_{2}}}\cdots t^{-\alpha_{i_{s_{i}}}}t^{-\beta_{1_{1}}+
\gamma^{\prime}}E(-\beta_{1_{2}})\cdots E(-\beta_{1_{m}})v_L$$
is linearly independent with $A$ if $A\neq0$, i.e., $t_{\gamma^{\prime}}.\bar{u}\neq0$.
It is clear that $r(t_{\gamma^{\prime}}.\bar{u})=m-1$
and $t_{\gamma^{\prime}}.u_{0}=0$, or $r(t_{\gamma^{\prime}}.u_{0})<m-1$. Now
we get that $0\neq t_{\gamma^{\prime}}.u\in\mathcal{U}(L)u$ and
$r(t_{\gamma^{\prime}}.u)=m-1$. Repeating the above process, we know that this
process will terminate after finite steps as $m<\infty$. Then we obtain $u^{\prime}\in\mathcal{U}(L)u$ and such that $r(u^{\prime})=1$.
\end{proof}

\begin{lem}
When the compatible total order $\preceq$ on $G$ is dense, then,
for any $w\in M(\mathbf{c}, \preceq)$ such that $r(w)=1$, there exists an element
$0\neq w^{\prime}\in\mathcal{U}(L)w\cap\mathcal{U}(L^{t}_{-})v_L$.
\end{lem}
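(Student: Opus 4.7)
The plan is to realize $w'$ as $t^{\gamma'}\cdot w$ for a single carefully chosen $\gamma'\in G_+$. Since $r(w)=1$, after combining terms with the same $\beta$ index the normal form $(3.1)$ for $w$ reads $w = \sum_{i=1}^{m} x_i E(-\beta_i)v_L + u'$ with $x_i\in\mathcal{U}(L^t_-)\setminus\{0\}$, pairwise distinct $\beta_i\in G_+$, and $u'\in\mathcal{U}(L^t_-)v_L$ (possibly zero). Because $L^t_-$ is abelian and commutes with $t^{\gamma'}$, because $t^{\gamma'}v_L=0$, and because the correction $\delta_{\gamma'-\beta_i,0}h(\gamma')$ in $[t^{\gamma'},E(-\beta_i)]$ disappears as soon as $\gamma'\prec\beta_i$, a direct calculation yields
\[
 t^{\gamma'}\cdot w \;=\; \sum_{i=1}^{m}\det\binom{-\beta_i}{\gamma'}\, x_i\, t^{\gamma'-\beta_i} v_L ,
\]
and the $u'$-contribution is killed by exactly the same commutation reasons. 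Since this expression already lies in $\mathcal{U}(L^t_-)v_L$, everything boils down to choosing $\gamma'$ so that the right-hand side is nonzero.

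To pick $\gamma'$ I would impose two conditions in turn. First, applying Lemma~$2.2$(1) with $\alpha=\beta_1$ and $\beta=\beta_i$ for each $i$ and intersecting the $m$ resulting thresholds produces some $\alpha^{\ast}\in G_+$ with $\alpha^{\ast}\prec\beta_1$ and $\det\binom{\beta_i}{\gamma}\neq 0$ for every $\gamma\in B(\alpha^{\ast})$ and every $1\leq i\leq m$. Second, letting $J(w)\subseteq G_+$ denote the (finite) set of indices $\alpha$ such that $t^{-\alpha}$ occurs anywhere in the normal form of $w$, I want $\beta_i-\gamma'\notin J(w)$ for every $i$; the forbidden set $\bigcup_i(\beta_i-J(w))$ is finite, while density of $\preceq$ makes $B(\alpha^{\ast})$ infinite, so an admissible $\gamma'\in B(\alpha^{\ast})$ exists. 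With such a $\gamma'$ all determinants $\det\binom{-\beta_i}{\gamma'}$ are nonzero, and the elements $\gamma_i:=\beta_i-\gamma'\in G_+$ are pairwise distinct and lie outside $J(w)$.

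The main obstacle, and the only step requiring an actual argument, is the final nonvanishing. Under the PBW identification $\mathcal{U}(L^t_-)\cong\C[t^{-\alpha}\mid\alpha\in G_+]$, every monomial of $x_i\, t^{-\gamma_i}$ has the indeterminate $t^{-\gamma_i}$ with multiplicity exactly one (because $\gamma_i\notin J(x_i)\subseteq J(w)$), while no monomial of $x_i\, t^{-\gamma_i}$ contains any $t^{-\gamma_j}$ with $j\neq i$ (because $\gamma_j\notin J(w)\supseteq J(x_i)$ and $\gamma_j\neq\gamma_i$). Projecting $t^{\gamma'}\cdot w$ onto the multidegree component where $t^{-\gamma_i}$ has degree one and each $t^{-\gamma_j}$ with $j\neq i$ has degree zero therefore isolates $\det\binom{-\beta_i}{\gamma'}\, x_i\, t^{-\gamma_i} v_L$, which is nonzero because both factors are. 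Hence $w':=t^{\gamma'}\cdot w$ is a nonzero element of $\mathcal{U}(L^t_-)v_L\cap\mathcal{U}(L)w$. The role of density is precisely to supply both the nonvanishing-of-determinants input (Lemma~$2.2$(1)) and enough elements in $B(\alpha^{\ast})$ to dodge $J(w)$; without this separation-of-variables freedom, the $m$ summands might cancel irreversibly.
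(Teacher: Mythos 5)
Your proof is correct, and it reaches the conclusion by a route that differs from the paper's in the two places where a choice has to be made. Both arguments apply a single toral element $t^{\gamma}$ to $w$ and use that this turns each $E(-\beta_i)$ into a multiple of $t^{\gamma-\beta_i}$ while annihilating the $r=0$ remainder; the difference is in how $\gamma$ is chosen and how nonvanishing is established. The paper orders the (distinct) $\beta_i$ decreasingly and invokes Lemma~$2.2$(3) to find $\alpha$ with $\beta_2\prec\alpha\prec\beta_1$ and $\det\binom{\beta_1}{\alpha}\neq 0$; then $t^{\alpha}$ kills every summand except the top one (because $\alpha-\beta_i\succ 0$ for $i\geq 2$ forces $t^{\alpha-\beta_i}v_L=0$), so the image is a single visibly nonzero term and no separation argument is needed. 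You instead take $\gamma'$ below all thresholds via Lemma~$2.2$(1), so that \emph{all} summands survive, and then prove the sum is nonzero by arranging $\beta_i-\gamma'\notin J(w)$ and isolating one summand through the multidegree of the fresh variable $t^{-\gamma_i}$ in the polynomial algebra $\mathcal{U}(L^{t}_{-})$. That is precisely the mechanism the paper uses in Lemma~$3.1$ (choosing $\gamma'\in B(\gamma)\cap B(\mu(\beta_{1_1}))$ avoiding $J(\bar u)$), so your proof unifies the two lemmas under one technique at the cost of a slightly longer nonvanishing argument; the paper's version of Lemma~$3.2$ is shorter but relies on the extra order-theoretic fact $2.2$(3). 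Two small points of hygiene: you should take $\alpha^{\ast}$ below $\min_i\beta_i$ rather than merely below $\beta_1$ so that every $\gamma_i=\beta_i-\gamma'$ genuinely lies in $G_+$ (otherwise some summands simply vanish, which does not hurt the argument but makes your displayed formula for $t^{\gamma'}\cdot w$ literally false), and note that $\det\binom{\beta_i}{\gamma'}\neq 0$ already rules out $\gamma'=\beta_i$, so the central correction $h(\gamma')$ never appears.
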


\begin{proof}
Suppose that
$$w=\sum\limits_{i=1}^{n}a_{i}t^{-\alpha_{i_{1}}}
t^{-\alpha_{i_{2}}}\cdots t^{-\alpha_{i_{s_{i}}}}E(-\beta_{i})v_L+w_{0},$$
where $a_{i}\neq0$,
$0\prec\alpha_{i_{1}}\preceq\alpha_{i_{2}}\preceq\cdots\preceq\alpha_{i_{s_{i}}}$ for
$1\leq i\leq n$, $0\prec \beta_{n}\prec \beta_{n-1}\prec\cdots\prec\beta_{1}$, and $r(w_{0})=0$ or $w_{0}=0$.
From Lemma $2.2(3)$, we can choose some $\alpha\in G_{+}$ such that $\beta_{2}\prec\alpha\prec\beta_{1}$
and $\text{det}{\beta_{1} \choose \alpha}\neq0$. Let $w^{\prime}=t^{\alpha}.w$. Then it is obvious that $$t^{\alpha}.(\sum\limits_{i=2}^{n}a_{i}t^{-\alpha_{i_{1}}}
t^{-\alpha_{i_{2}}}\cdots t^{-\alpha_{i_{s_{i}}}}E(-\beta_{i})v_L+w_{0})=0.$$
Combine this together with the following equality
$$t^{\alpha}.a_{1}t^{-\alpha_{1_{1}}}
t^{-\alpha_{1_{2}}}\cdots t^{-\alpha_{1_{s_{1}}}}E(-\beta_{1})v_L
=\text{det}{-\beta_{1} \choose \alpha}a_{1}t^{-\alpha_{1_{1}}}
t^{-\alpha_{1_{2}}}\cdots t^{-\alpha_{1_{s_{1}}}}t^{-\beta_{1}+\alpha}v_L,$$
we have $0\neq w^{\prime}\in\mathcal{U}(L)w\cap\mathcal{U}(L^{t}_{-})v_L$.
\end{proof}

Now we state the the necessary and sufficient conditions
for the Verma modules $M(\mathbf{c}, \preceq)$ to be irreducible under the assumption that the compatible total order $\preceq$ is dense on $G$.

\begin{thm}
Suppose that the compatible total order $\preceq$ is dense. Then the Verma module $M(\mathbf{c}, \preceq)$
is irreducible if and only if $(c_{1}, c_{2})\neq(0, 0)$.
\end{thm}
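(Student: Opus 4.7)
The plan is to prove both directions. For the \emph{if} direction, given any nonzero submodule $W\subseteq M(\mathbf{c},\preceq)$ and any $0\ne u\in W$, iterated application of Lemmas 3.1 and 3.2 produces a nonzero $w\in W\cap\mathcal{U}(L^t_-)v_L$. It then remains to establish the following self-contained claim: for any nonzero $w\in\mathcal{U}(L^t_-)v_L$ there is a product $\xi=E(\alpha_1)\cdots E(\alpha_r)$ of elements of $L^E_+$ with $0\ne \xi w\in\C v_L$. I prove this by induction on the length $s$ of $w$, i.e., the maximal number of $t^{-\gamma}$-factors in any monomial appearing in $w$; the base case $s=0$ is immediate. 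For the inductive step, let $\gamma$ be the overall maximum exponent occurring in $w$. Since $\gamma$ is maximal, $E(\gamma)w$ has no scatter contributions, and a direct computation yields $E(\gamma)w=h(\gamma)\,w_1$ where $w_1$ is a nonzero element of length less than $s$ obtained from those monomials of $w$ containing $\gamma$ by removing one copy of $\gamma$, weighted by multiplicity. If $h(\gamma)\ne 0$ the induction closes on $w_1$.

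When $h(\gamma)=0$, $\gamma$ lies in the subgroup $\ker h\cap G$ of $\Z^2$, and the remedy is to first apply $E(\beta)$ for a well-chosen auxiliary $\beta$. Using the density of $\preceq$ (and the fact that both $\ker h\cap G$ and $\Q\gamma\cap G$ meet the interval $(0,\gamma-\gamma')$ in only finitely many elements, where $\gamma'$ is the second-largest exponent appearing in $w$), I pick $\beta\in G_+$ with $\beta\notin\ker h$, $\text{det}{\gamma\choose\beta}\ne 0$, and $\beta\prec\gamma-\gamma'$, so that $\gamma-\beta\succ\gamma'$ strictly dominates every other exponent appearing in $w$. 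Since $\gamma\in\ker h$ and $\beta\notin\ker h$, $h(\gamma-\beta)\ne 0$, and a direct computation shows that $E(\gamma-\beta)E(\beta)w$ contains a nonzero length-$(s-1)$ piece $\text{det}{\beta\choose\gamma}\,h(\gamma-\beta)\,w_1$, produced by scatter at the $\gamma$-position under $E(\beta)$ followed by $h$-contraction of the shifted exponent $\gamma-\beta$ under $E(\gamma-\beta)$; the leftover length-$s$ scatter residue is handled by iterating the construction. This is the main obstacle of the argument: the composite $E(\gamma-\beta)E(\beta)$ mixes a useful length-$(s-1)$ term with unwanted length-$s$ residue, and controlling this residue via the density-guaranteed flexibility in choosing $\beta$ very close to $\gamma$ is the heart of the matter.

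For the \emph{only if} direction, suppose $(c_1,c_2)=(0,0)$, so $h$ acts as zero on $v_L$. I exhibit the explicit proper submodule $N:=\mathcal{U}(L)\cdot t^{-\gamma}v_L$ for any fixed $\gamma\in G_+$, which is nonzero because it contains $t^{-\gamma}v_L$. The content is to show $v_L\notin N$. Using the $\Z^2$-grading, only the grading-$\gamma$ component of any $\xi\in\mathcal{U}(L)$ can contribute to the $v_L$-coefficient of $\xi\cdot t^{-\gamma}v_L$; PBW together with the fact that every element of $\mathcal{U}(L_+)$ of strictly positive grading annihilates $v_L$ reduces the task to showing $\eta\cdot t^{-\gamma}v_L=0$ for all $\eta\in\mathcal{U}(L_+)_\gamma$. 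I prove this by induction on the number of Lie-algebra factors in $\eta$: the base cases $\eta=t^\gamma$ and $\eta=E(\gamma)$ yield zero by direct computation (using $t^\gamma v_L=0$, the commutativity of $t$'s, and $[E(\gamma),t^{-\gamma}]=-h(-\gamma)=h(\gamma)=0$ under the hypothesis); for longer $\eta=X\eta'$, the identity $\eta\cdot t^{-\gamma}v_L=X[\eta',t^{-\gamma}]\cdot v_L$ (valid because $\eta'v_L=0$ when $\eta'$ has positive grading) combined with the Leibniz expansion of $[\eta',t^{-\gamma}]$ --- whose factor brackets under the hypothesis are each either $0$ or a single $t$-element --- reduces to strictly shorter instances of the same claim, completing the induction. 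Hence $v_L\notin N$ and $M(\mathbf{c},\preceq)$ is reducible.
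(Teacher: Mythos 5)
Your ``only if'' direction is fine (indeed more detailed than the paper's one-line assertion, which uses the submodule generated by all $t^{-\alpha}v_L$ rather than a single $t^{-\gamma}v_L$), and your reduction via Lemmas 3.1 and 3.2 to a nonzero element of $\mathcal{U}(L^{t}_{-})v_L$ matches the paper. The genuine gap is in the subcase of your induction where the maximal exponent $\gamma$ satisfies $\gamma(1)c_1+\gamma(2)c_2=0$. There your composite operator $E(\gamma-\beta)E(\beta)$ produces, alongside the useful length-$(s-1)$ term, a length-$s$ residue (coming from monomials in which $E(\beta)$ scatters at a position other than a $\gamma$-position, after which $E(\gamma-\beta)$ converts a surviving $t^{-\gamma}$ into $t^{-\beta}$, and from monomials where $\gamma$ occurs with multiplicity greater than one). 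You acknowledge this residue and defer it to ``iterating the construction,'' but you give no decreasing measure: the residue has the same length $s$, its exponent set keeps acquiring new elements ($\beta$, $\alpha_j-\beta$, \dots) at each pass, and nothing you have set up guarantees termination or prevents cancellation against the length-$(s-1)$ piece produced at later stages. Since you yourself flag this as ``the heart of the matter,'' the proof is not complete at exactly the point where the real difficulty lies.

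The paper avoids the residue entirely by a different choice of operator: it never pre-applies $E(\beta)$. Instead, with $b$ the multiplicity of the top exponent $\alpha_{1_{s_1}}$ in the maximal monomial, it applies $(E(\alpha_{1_{s_1}}-\eta))^{b}$ directly, where $\eta$ is chosen (using density, the finiteness of $J(u)$, and the finiteness of the set $X$ of ``bad'' exponents $\preceq\alpha_{1_{s_1}}$) to satisfy $\eta(1)c_1+\eta(2)c_2\neq0$, $2\eta\prec\alpha_{1_{s_1}}$, and crucially $\eta\prec\alpha_{1_{s_1}}-\alpha_{i_{i'}}$ for \emph{every} other exponent $\alpha_{i_{i'}}$ occurring in $u$. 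With this choice every unwanted commutator $[E(\alpha_{1_{s_1}}-\eta),t^{-\alpha_{i_{i'}}}]$ is a multiple of $t^{\alpha_{1_{s_1}}-\eta-\alpha_{i_{i'}}}\in L^{t}_{+}$, which commutes with the remaining $t$'s and annihilates $v_L$; so there is no residue at all, and the net effect is to replace all $b$ copies of the bad exponent $\alpha_{1_{s_1}}$ by copies of the good exponent $\eta$ in the surviving monomials. The induction is then not on the length $s$ (which does not drop in this step) but on the set $J(u)$ of distinct exponents, which strictly shrinks modulo the single good element $\eta$; after finitely many steps all exponents $\alpha\in J$ satisfy $\alpha(1)c_1+\alpha(2)c_2\neq0$ and one concludes as in your good case. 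To repair your argument you should replace the two-step operator $E(\gamma-\beta)E(\beta)$ by the single operator $E(\gamma-\eta)^{b}$ with $\eta$ constrained below all pairwise differences of exponents of $w$, and change the induction variable accordingly.
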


\begin{proof}
If $(c_{1}, c_{2})=(0, 0)$, it is clear that the submodule, generated by the set
$$\{t^{-\alpha}v_L\mid \alpha\in G_{+}\}, $$ is contained in $\sum\limits_{\a\prec0}M(\mathbf{c}, \preceq)_{\a}$.
Thus the Verma module $M(\mathbf{c}, \preceq)$ is reducible. Now we suppose that $(c_{1}, c_{2})\not=(0, 0)$.
From Lemma $3.1$ and Lemma $3.2$, for any $0\neq u\in M(\mathbf{c}, \preceq)$, we can choose an element
$0\neq u^{\prime}\in\mathcal{U}(L)u\cap\mathcal{U}(L^{t}_{-})v_L$. Without loss of generality,
we assume that $0\neq u\in\mathcal{U}(L^{t}_{-})v_L$. Let
$$u=\sum\limits_{i=1}^{k}a_{i}t^{-\alpha_{i_{1}}}
t^{-\alpha_{i_{2}}}\cdots t^{-\alpha_{i_{s_{i}}}}v_L,$$
where $0\not=a_{i}\in\C$, $0\prec\alpha_{i_{1}}\preceq\alpha_{i_{2}}\preceq\cdots\preceq\alpha_{i_{s_{i}}}$ for
$1\leq i\leq k$. Let $\underline{\a_{i}}=(\alpha_{i_{1}}, \alpha_{i_{2}}, \cdots, \alpha_{i_{s_{i}}})$ for
$1\leq i\leq k$, and we may assume that $\underline{\a_{1}}\succ \underline{\a_{2}}\succ\cdots \succ\underline{\a_{k}}$.
Let $b$ be the number of the element $\a_{1_{s_1}}$ occurring in the sequence $\underline{\a_{1}}$. Then there exists $1\leq k_1\leq k$ such that the number of the element $\alpha_{1_{s_{1}}}$ occurring in the sequence
$\underline{\a_{i}}$ for $1\leq i\leq k_{1}$ are all equal to $b$, while for the remaining sequences $\underline{\a_{i}}$ for $k_{1}< i\leq k$ the element $\alpha_{1_{s_{1}}}$ occurs strictly less then $b$ times.

Now we divide our argument into two cases as follows.

\textbf{Case $1$}. If $\alpha(1)c_{1}+\alpha(2)c_{2}\neq0$ for all $\alpha\in J(u)$, where
$J(u)=\{\alpha_{i_{j}}\mid1\leq i\leq k,\ 1\leq j\leq s_{i}\}$.
We consider $E(\alpha_{1_{1}})E(\alpha_{1_{2}})\cdots E(\alpha_{1_{s_{1}}}).v_L$.
Since $\underline{\a_{1}}$ is the maximal element in the set
$\{\underline{\a_{i}}\mid1\leq i\leq k\}$, we have
$$E(\alpha_{1_{1}})E(\alpha_{1_{2}})\cdots E(\alpha_{1_{s_{1}}}).\sum\limits_{i=2}^{k}a_{i}t^{-\alpha_{i_{1}}}
t^{-\alpha_{i_{2}}}\cdots t^{-\alpha_{i_{s_{i}}}}v_L=0,$$
and it is straightforward to check that
$$0\neq E(\alpha_{1_{1}})E(\alpha_{1_{2}})\cdots E(\alpha_{1_{s_{1}}}).t^{-\alpha_{1_{1}}}
t^{-\alpha_{1_{2}}}\cdots t^{-\alpha_{1_{s_{1}}}}v_L\in\C v_L.$$
This implies that
$$0\neq E(\alpha_{1_{1}})E(\alpha_{1_{2}})\cdots E(\alpha_{1_{s_{1}}}).u\in\C v_L.$$
Thus we get $v_L\in\mathcal{U}(L)u$.

\vspace{3mm}

\textbf{Case $2$}. If there exists some $\alpha\in J(u)$ such that $\alpha(1)c_{1}+\alpha(2)c_{2}=0$,
we claim that the set \[X:=\{\beta\in G_{+}\mid\beta\preceq \alpha_{1_{s_{1}}},\ \beta(1)c_{1}+\beta(2)c_{2}=0\}\]is finite.
In fact, $\mu(\alpha)$ is the smallest element in the set $X$ and $X\subset\{m\mu(\alpha)\mid m\in\N\}$.
Thus by Lemma $2.2(2)$, we get that $X$ is a finite set.

If $\alpha_{1_{s_{1}}}(1)c_{1}+\alpha_{1_{s_{1}}}(2)c_{2}\neq0$, then we consider
$(E(\alpha_{1_{s_{1}}}))^{b}.u$. It is straightforward to check that
$$(E(\alpha_{1_{s_{1}}}))^{b}.\sum\limits_{i=k_{1}+1}^{k}a_{i}t^{-\alpha_{i_{1}}}
t^{-\alpha_{i_{2}}}\cdots t^{-\alpha_{i_{s_{i}}}}v_L=0,$$
and
$$(E(\alpha_{1_{s_{1}}}))^{b}.u=\sum\limits_{i=1}^{k_{1}}b!
(\alpha_{1_{s_{1}}}(1)c_{1}+\alpha_{1_{s_{1}}}(2)c_{2})^{b}a_{i}t^{-\alpha_{i_{1}}}
t^{-\alpha_{i_{2}}}\cdots t^{-\alpha_{i_{s_{i}-b}}}v_L\neq0,$$
where $0\prec\alpha_{i_{1}}
\preceq\alpha_{i_{2}}\preceq\cdots\preceq\alpha_{i_{s_{i}-b}}\prec\alpha_{1_{s_{1}}}$ for $1\leq i\leq k_{1}$.
One can see that $J((E(\alpha_{1_{s_{1}}}))^{b}.u)\subsetneqq J(u)$.

On the other hand, if $\alpha_{1_{s_{1}}}(1)c_{1}+\alpha_{1_{s_{1}}}(2)c_{2}=0$,
let
$$\begin{aligned}
\alpha_{0}=\text{min}(\{\alpha_{1_{s_{1}}}-\alpha_{i_{i^{\prime}}},\
\alpha_{i_{i^{\prime}}}\mid1\leq i\leq k,\ 1\leq i^{\prime}\leq s_{i}\}\cap G_{+}).
\end{aligned}$$
We claim that there exists
$\eta\in B(\alpha_{0})$ such that $2\eta\prec \alpha_{1_{s_{1}}}$ and
$\eta(1)c_{1}+\eta(2)c_{2}\neq0$. In fact,
let $0\prec\delta\prec\alpha_{1_{s_{1}}}$ be such that $2\delta\neq\alpha_{1_{s_{1}}}$, and
$$\forall\eta\in B(\text{min}\{\alpha_{0},\ \delta,\ \alpha_{1_{s_{1}}}-\delta\})\setminus
\{\beta\in G_{+}\mid\beta\preceq \alpha_{1_{s_{1}}},\ \beta(1)c_{1}+\beta(2)c_{2}=0\},$$
we have $\eta(1)c_{1}+\eta(2)c_{2}\neq0$, $\eta\prec\delta$ and $\eta\prec\alpha_{1_{s_{1}}}-\delta$.
That is $\eta\prec\alpha_{0}$, $2\eta\prec \alpha_{1_{s_{1}}}$ and $\eta(1)c_{1}+\eta(2)c_{2}\neq0$.
This, together with the assumption that the numbers of the element $\alpha_{1_{s_{1}}}$ occurring in the sequences
$\underline{\a_{i}}$ for $k_{1}< i\leq k$ are all strictly less then $b$, implies that
$$(E(\alpha_{1_{s_{1}}}-\eta))^{b}.\sum\limits_{i=k_{1}+1}^{k}a_{i}t^{-\alpha_{i_{1}}}
t^{-\alpha_{i_{2}}}\cdots t^{-\alpha_{i_{s_{i}}}}v_L=0.$$
Thus
$$(E(\alpha_{1_{s_{1}}}-\eta))^{b}.u=\sum\limits_{i=1}^{k_{1}}b!a_{i}
(\text{det}{\alpha_{1_{s_{1}}} \choose \eta})^{b}(t^{-\eta})^{b}t^{-\alpha_{i_{1}}}
t^{-\alpha_{i_{2}}}\cdots t^{-\alpha_{i_{s_{i}-b}}}v_L\neq0,$$
where $0\prec\alpha_{i_{1}}
\preceq\alpha_{i_{2}}\preceq\cdots\preceq\alpha_{i_{s_{i}-b}}\prec\alpha_{1_{s_{1}}}$ for $1\leq i\leq k_{1}$.
We can see that $J((E(\alpha_{1 n_{1}}-\varepsilon))^{b}.u)\setminus\{\eta\}\subsetneqq J(u)$.
It is clear that, after repeating the above process for finite times, we have
$$0\neq w=\sum\limits_{i=1}^{n}d_{i}t^{-\eta_{i_{1}}}t^{-\eta_{i_{2}}}\cdots t^{-\eta_{i_{s_{i}}}}v_L\in\mathcal{U}(L)u,$$
where $0\neq d_{i}\in\C$, $0\prec\eta_{i_{1}}\preceq\eta_{i_{2}}\preceq\cdots\preceq\eta_{i_{s_{i}}}$
for $1\leq i\leq n$, and $\alpha(1)c_{1}+\alpha(2)c_{2}\neq0$ for all $\alpha\in J(w)$. This implies that we can
turn this case into \textbf{Case $1$}. Therefore, we also obtain that $v_L\in\mathcal{U}(L)u$, and
hence $M(\mathbf{c}, \preceq)$ is irreducible as required.
\end{proof}

Now we consider the situation that the compatible total order $\preceq$ on $G$ is discrete, i.e.,
there exists a smallest positive element $\epsilon$ in $G_{+}$.
For $$u=\sum\limits_{i=1}^{m}\sum\limits_{j=1}^{n}a_{ij}t^{-\alpha_{i_{1}}}
t^{-\alpha_{i_{2}}}\cdots t^{-\alpha_{i_{s_{i}}}}E(-\beta_{j_{1}})
E(-\beta_{j_{2}})\cdots E(-\beta_{j_{k_{j}}})e_{ij}v_L,$$
where $a_{ij}\neq0$, $0\prec\alpha_{i_{1}}
\preceq\alpha_{i_{2}}\preceq\cdots\preceq\alpha_{i_{s_{i}}}$,
$\Z\epsilon\prec\beta_{j_{1}}\preceq\beta_{j_{2}}\preceq\cdots\preceq\beta_{j_{k_{j}}}$, and
$0\neq e_{ij}\in \mathcal{U}(E_{\epsilon})$ for $1\leq i\leq m$, $1\leq j\leq n$, we define
$$l(u)=\text{max}\{k_{j}\mid1\leq j\leq n\}.$$
It is clear that, if $0\neq u\in\mathcal{U}(L^{t}_{-})\mathcal{U}(E_{\epsilon})v_L$, one has $l(u)=0$.
Similarly, for $0\neq w\in\mathcal{U}(L^{t}_{-})v_L$, we write
$$w=\sum\limits_{i=1}^{s}T_{i}t^{-\alpha_{i_{1}}}t^{-\alpha_{i_{2}}}\cdots
t^{-\alpha_{i_{n_{i}}}}v,$$
where $\Z \epsilon\prec\alpha_{i_{1}}\preceq\alpha_{i_{2}}\preceq\cdots\preceq\alpha_{i_{n_{i}}}$
and $0\neq T_{i}\in\mathcal{U}(T_{\epsilon})$  for $1\leq i\leq s$,
we define $$l^{\prime}(w)=\text{max}\{n_{i}\mid 1\leq i\leq s\}.$$
Then one has $l^{\prime}(w)=0$ if $0\neq w\in\mathcal{U}(T_{\epsilon})v_L$.

In order to prove our next main result under the assumption that the total order $\preceq$ on $G$ is discrete, we need the following two lemmas.

\begin{lem}
Suppose that the compatible total order $\preceq$ on $G$ is discrete, then
for any $u\in M(\mathbf{c}, \preceq)$ with $l(u)=m>1$, there exists an element
$0\neq u^{\prime}\in\mathcal{U}(L)u$ such that $l(u^{\prime})=0$.
\end{lem}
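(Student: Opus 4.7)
The plan is to follow the template of Lemma~3.1 and iterate the one-step reduction $l(u)=m\mapsto l(u)=m-1$ all the way down to $0$; no discrete analog of Lemma~3.2 will be needed. First I would write $u=\bar u+u_0$, where every monomial of $\bar u$ carries exactly $m$ factors $E(-\beta_{j_s})$ with $\beta_{j_s}\succ\Z\epsilon$ and $l(u_0)<m$ (or $u_0=0$). Arrange the sequences $\underline{\beta_j}$ occurring in $\bar u$ so that $\underline{\beta_1}\succeq\underline{\beta_2}\succeq\cdots\succeq\underline{\beta_d}$ in the tuple order of Section~2, and let $a$ be the multiplicity of $\beta_{1_1}$ in $\underline{\beta_1}$.

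The central substitution for the dense-case choice $\gamma'\in B(\gamma)\cap B(\mu(\beta_{1_1}))$ from Lemma~2.2(1) is $\gamma'=k\epsilon$ for a suitable $k\in\N$. Since $\beta_{1_1}\succ\Z\epsilon$, both $\beta_{1_1}$ and $\mu(\beta_{1_1})$ lie outside $\Z\epsilon$, so Lemma~2.4(2) gives $\mu(\beta_{1_1})\succ k\epsilon$ for every $k\in\N$, and
$$\text{det}{\beta_{1_1} \choose k\epsilon}=k\cdot\text{det}{\beta_{1_1} \choose \epsilon}\neq 0$$
because $\beta_{1_1}$ is not parallel to $\epsilon$. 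I would then enlarge $k$ to avoid the two finite exceptional sets $\{k':\beta_{1_1}-k'\epsilon\in J(\bar u)\}$ and $\{k':E(-k'\epsilon)\ \text{appears in some}\ e_{ij}\}$. The first avoidance makes the new $t$-exponent $-(\beta_{1_1}-\gamma')$ fresh with respect to $\bar u$; the second kills the $\delta$-term $h(\gamma')$ in $[t^{\gamma'},E(-k'\epsilon)]$ which would otherwise produce an $l=m$ summand in $t^{\gamma'}u$ and spoil the reduction.

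With this choice, $t^{\gamma'}$ acts on $u$ only through the commutators $[t^{\gamma'},E(-\beta_{j_s})]=\text{det}{-\beta_{j_s} \choose \gamma'}\,t^{-(\beta_{j_s}-\gamma')}$ for $\beta_{j_s}\succ\Z\epsilon$. Isolating the $j=1,\,s=1$ contribution yields
$$a\cdot\text{det}{-\beta_{1_1} \choose \gamma'}\sum_{i}b_{i1}\,t^{-\alpha_{i_1}}\cdots t^{-\alpha_{i_{s_i}}}\,t^{-(\beta_{1_1}-\gamma')}\,E(-\beta_{1_2})\cdots E(-\beta_{1_m})\,e_{i1}\,v_L,$$
whose PBW-normal form carries the fresh $t$-exponent $-(\beta_{1_1}-\gamma')$ and the truncated $E$-sequence $(\beta_{1_2},\ldots,\beta_{1_m})$ of length $m-1$. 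The lex-maximality of $\underline{\beta_1}$ together with the freshness condition forces this term to be linearly independent of every other contribution, so $0\ne t^{\gamma'}u\in\mathcal U(L)u$ with $l(t^{\gamma'}u)=m-1$. Iterating the step $m$ times produces the required $u'\in\mathcal U(L)u$ with $l(u')=0$. The main obstacle is the simultaneous bookkeeping that guarantees nonvanishing: the top-block contribution must survive PBW normalization, and no Heisenberg scalar $h(\gamma')$ may resurrect a summand back to level $m$. Both issues are controlled by the two finite avoidance conditions on $k$, after which the argument is a verbatim discrete analog of the inductive step of Lemma~3.1.
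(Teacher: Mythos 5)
Your argument is correct and follows essentially the same one-step reduction as the paper's proof: act by a suitable positive element $t^{\gamma^{\prime}}$ to strip one factor $E(-\beta)$ with $\beta\succ\Z\epsilon$, choosing $\gamma^{\prime}$ by finite avoidance so that the new $t$-exponent is fresh and the leading block (weighted by the multiplicity $a$ of $\beta_{1_1}$ in the maximal sequence) survives PBW normalization, then iterate down to $l=0$. The only substantive difference is the choice of $\gamma^{\prime}$: the paper takes $\gamma^{\prime}=\alpha-n\epsilon$ with $\alpha=\min\{\alpha_{i_1}\}\succ\Z\epsilon$ and $n$ large, which makes the central terms $\delta_{\gamma^{\prime}-k^{\prime}\epsilon,\,0}\,h(\gamma^{\prime})$ vanish automatically but forces it to verify that $\text{det}{\alpha_{1_1} \choose \alpha-n\epsilon}\neq0$ for infinitely many $n$, whereas your $\gamma^{\prime}=k\epsilon$ makes the determinant automatically nonzero (since $\beta_{1_1}\notin\Z\epsilon$ and $\{\epsilon,\epsilon^{\prime}\}$ is a $\Z$-basis) but requires your extra avoidance condition on the degrees occurring in the $e_{ij}$ to kill the central terms --- both choices work, and your citation of the discrete-order lemma should read Lemma $2.3(2)$ rather than $2.4(2)$.
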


\begin{proof}
Write
\[u=\sum_{i=1}^{s}T_{i}E(-\alpha_{i_{1}})
E(-\alpha_{i_{2}})\cdots E(-\alpha_{i_{m}})e_{i}v_L+\sum_{j=1}^{s^{\prime}}T^{\prime}_{j}E(-\beta_{j_{1}})
E(-\beta_{j_{2}})\cdots E(-\beta_{j_{m_{j}}})e^{\prime}_{j}v_L,\]
where $T_{i}, T^{\prime}_{j}\in\mathcal{U}(L^{t}_{-})$, \
$ e_{i}, e^{\prime}_{j}\in \mathcal{U}(E_{\epsilon})$ with $T_i, e_i\not=0$, and
$\Z\epsilon\prec\alpha_{i_{1}}\preceq\alpha_{i_{2}}
\preceq\cdots\preceq\alpha_{i_{m}}$, $\Z\epsilon\prec\beta_{j_{1}}\preceq\beta_{j_{2}}
\preceq\cdots\preceq\beta_{j_{m_{j}}}$, $m>\text{max}\{m_{j}\mid1\leq j\leq s^{\prime}\}$,
$1\leq i\leq s$, $1\leq j\leq s^{\prime}$. Let $\underline{\a_{i}}=
(\alpha_{i_{1}}, \alpha_{i_{2}}, \cdots, \alpha_{i_{m}})$ for $1\leq i\leq s$. One may assume that
$\underline{\a_{1}}\succeq\underline{\a_{2}}\succeq\cdots\succeq\underline{\a_{s}}$. Let $a$ be the number of the element $\alpha_{1_{1}}$ which appears
in the sequence $\underline{\a_{1}}$ for $a$ times.

For $\alpha=\text{min}\{\alpha_{i_{1}}\mid 1\leq i\leq s\}$, we claim that the set
$$\{\alpha-n\epsilon\mid\text{det}{\alpha_{1_{1}} \choose \alpha-n\epsilon}\neq0,\ n\in\N\}$$ is infinite.
In fact, the set $\{\gamma\mid\text{det}{\alpha_{1_{1}} \choose \gamma}=0,\ 0\prec\gamma\preceq\alpha_{1_{1}}\}
\subset\{n\mu(\alpha_{1_{1}})\mid n\in\N\}$ is finite,
which implies that the above set is infinite. Now we choose an integer $n\in\N$ such that
$\text{det}{\alpha_{1_{1}} \choose \alpha-n\epsilon}\neq0$ and $\alpha_{1_{1}}-\alpha+n\epsilon\notin J(u)$ as $J(u)$ is a finite set.

Consider $u_{1}=t^{\alpha-n\epsilon}.u$. Then we have
$$u_{1}=a\text{det}{-\alpha_{1_{1}} \choose \alpha-n\epsilon}T_{1}t^{-\alpha_{1_{1}}+\alpha-n\epsilon}
E(-\alpha_{1_{2}})\cdots E(-\alpha_{1_{m}})e_{1}v_L+A,$$for some $A\in M(\mathbf{c}, \preceq)$.
Since $m>\text{max}\{m_{j}\mid1\leq j\leq s^{\prime}\}$,
$\underline{\a_{1}}\succeq\underline{\a_{2}}\succeq\cdots\succeq\underline{\a_{s}}$
and $\alpha_{1_{1}}-\alpha+n\epsilon\notin J(u)$, we can deduce that
$$a\text{det}{-\alpha_{1_{1}} \choose \alpha-n\epsilon}T_{1}t^{-\alpha_{1_{1}}+\alpha-n\epsilon}
E(-\alpha_{1_{2}})\cdots E(-\alpha_{1_{m}})e_{1}v_L$$ is linearly independent with $A$ if $A\neq0$.
This implies that $0\neq u_{1}\in\mathcal{U}(L)u$ and $l(u_{1})=m-1<l(u)$. Therefore, after repeating the above process for
finitely many times, one obtains that there exists an element $0\neq u^{\prime}\in\mathcal{U}(L)u$ such that $l(u^{\prime})=0$.
\end{proof}

\begin{lem}
Suppose that the compatible total order $\preceq$ on $G$ is discrete
and $\epsilon(1)c_{1}+\epsilon(2)c_{2}\neq0$, Then
for any $w\in M(\mathbf{c}, \preceq)$ with $l(w)=0$, there exists an element
$0\neq w^{\prime}\in\mathcal{U}(L)w\cap\mathcal{U}(L^{t}_{-})v_L$.
\end{lem}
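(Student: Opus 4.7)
The plan is to exploit the hypothesis $\epsilon(1)c_{1}+\epsilon(2)c_{2}\neq 0$ by converting each $t^{k\epsilon}$ (for $k\in\N$) into a derivation that strips off factors of $E(-k\epsilon)$ from $w$. The crucial computation is that $\text{det}\binom{-j\epsilon}{k\epsilon}=0$, so $[t^{k\epsilon},E(-j\epsilon)]=\delta_{k,j}\,h(k\epsilon)$. Since $h(k\epsilon)=k\epsilon(1)K_{1}+k\epsilon(2)K_{2}$ is central, $L^{t}_{-}$ is abelian, and $t^{k\epsilon}v_L=0$, a Leibniz expansion yields, for any $x\in\mathcal{U}(L^{t}_{-})$ and any $P\in\mathcal{U}(E_{\epsilon})\cong\C[E(-j\epsilon):j\geq 1]$,
\begin{equation*}
t^{k\epsilon}\cdot(xPv_L)\;=\;kc\,x\,\frac{\partial P}{\partial E(-k\epsilon)}\,v_L,\qquad c:=\epsilon(1)c_{1}+\epsilon(2)c_{2}\neq 0,
\end{equation*}
where $\partial/\partial E(-k\epsilon)$ is the ordinary polynomial derivative.

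To finish, I would group the PBW expansion of $w$ as $w=\sum_{j}x_{j}P_{j}v_L$ with $0\neq x_{j}\in\mathcal{U}(L^{t}_{-})$ and distinct monomials $P_{j}\in\mathcal{U}(E_{\epsilon})$, and induct on the number $r$ of distinct positive integers $k$ for which $E(-k\epsilon)$ appears in some $P_{j}$. If $r=0$ we are done. Otherwise, pick such a $k$, call it $k_{1}$, set $A=\max_{j}\deg_{E(-k_{1}\epsilon)}P_{j}$, and apply $(t^{k_{1}\epsilon})^{A}$ to $w$; by the displayed identity the result equals
\begin{equation*}
(k_{1}c)^{A}A!\sum_{j\in S}x_{j}\bigl(P_{j}/E(-k_{1}\epsilon)^{A}\bigr)v_L,\qquad S=\{j:\deg_{E(-k_{1}\epsilon)}P_{j}=A\}\neq\emptyset.
\end{equation*}
The monomials $P_{j}/E(-k_{1}\epsilon)^{A}$ for $j\in S$ remain distinct and no longer involve $E(-k_{1}\epsilon)$, so by PBW linear independence this element is nonzero. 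The inductive hypothesis then supplies the desired $0\neq w^{\prime}\in\mathcal{U}(L)w\cap\mathcal{U}(L^{t}_{-})v_L$.

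The only substantive issue is verifying non-vanishing at each step; this reduces to the linear independence in $\mathcal{U}(L_{-})v_L\cong\mathcal{U}(L^{t}_{-})\otimes\mathcal{U}(L^{E}_{-})$ of $\{x_{j}Qv_L\}$ over distinct monomials $Q$ with $x_{j}\neq 0$. Once the expansion is organized by the $E$-monomial $P_{j}$ before any derivation is applied, this is automatic, so the obstacle is minor. The reason the argument succeeds only in the present setting is that the determinant in $[t^{k\epsilon},E(-j\epsilon)]$ vanishes along $\Z\epsilon$, leaving the central correction $h(k\epsilon)$ as the sole remaining contribution, which is precisely where the hypothesis $c\neq 0$ enters.
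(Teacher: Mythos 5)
Your proof is correct and rests on exactly the same key computation as the paper's: since $\text{det}{-j\epsilon \choose k\epsilon}=0$, each $t^{k\epsilon}$ acts on $\mathcal{U}(L^{t}_{-})\mathcal{U}(E_{\epsilon})v_L$ as the derivation $k(\epsilon(1)c_{1}+\epsilon(2)c_{2})\,\partial/\partial E(-k\epsilon)$, and the hypothesis guarantees the scalar is nonzero. The only difference is organizational: the paper applies the whole product $t^{s_{1}\epsilon}\cdots t^{s_{k_{s}}\epsilon}$ attached to the maximal $E$-monomial in a single step, while you strip off one variable $E(-k_{1}\epsilon)$ at a time by induction on the number of distinct exponents; both amount to the same argument.
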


\begin{proof}
Since $l(w)=0$, we have
$$w=\sum_{i=1}^{s}T_{i}E(-i_{1}\epsilon)E(-i_{2}\epsilon)\cdots E(-i_{k_{i}}\epsilon)v_L,$$
where $0\neq T_{i}\in U(L^{t}_{-})$ and $1\leq i_{1}\leq i_{2}\leq\cdots\leq i_{k_{i}}$ for $1\leq i\leq s$.
We may assume that $(s_{1}\epsilon, s_{2}\epsilon, \cdots, s_{k_{s}}\epsilon)$ is maximal  in the set
$\{(i_{1}\epsilon, i_{2}\epsilon, \cdots, i_{k_{i}}\epsilon)\mid 1\leq i\leq s\}$ under the total order $\preceq$. Let $a$ be the number of
the element $s_{k_{s}}\epsilon$ which occurrs in the sequence $(s_{1}\epsilon, s_{2}\epsilon, \cdots, s_{k_{s}}\epsilon)$ for $a$ times.
Consider $w^{\prime}=t^{s_{1}\epsilon}t^{s_{2}\epsilon}\cdots t^{s_{k_{s}}\epsilon}.w$.
Then it is straightforward to check that
$$t^{s_{1}\epsilon}t^{s_{2}\epsilon}\cdots t^{s_{k_{s}}\epsilon}.\sum_{i=1}^{s-1}T_{i}E(-i_{1}\epsilon)
E(-i_{2}\epsilon)\cdots E(-i_{k_{i}}\epsilon)v_L=0$$and
$$
(t^{s_{k_{s}}\epsilon})^{a}.T_{s}E(-s_{1}\epsilon)
E(-s_{2}\epsilon)\cdots E(-s_{k_{s}}\epsilon)v
$$
$$
=a!(s_{k_{s}}\epsilon(1)c_{1}+s_{k_{s}}\epsilon(2)c_{2})^{a}T_{s}
E(-s_{1}\epsilon)E(-s_{2}\epsilon)\cdots E(-(s_{k_{s}-a})\epsilon)v_L.
$$
From these two identities and the assumption $\epsilon(1)c_{1}+\epsilon(2)c_{2}\neq0$, we obtain
$$0\neq w^{\prime}\in\mathcal{U}(L)w\cap\mathcal{U}(L^{t}_{-})v_L$$
as required.
\end{proof}

Now we state the the necessary and sufficient conditions
for the Verma module $M(\mathbf{c}, \preceq)$ to be irreducible under the assumption that the compatible total order $\preceq$ is discrete on $G$.

\begin{thm}
Suppose that the compatible total order $\preceq$ on $G$ is discrete, then the Verma module
$M(\mathbf{c}, \preceq)$ is irreducible if and only if $\epsilon(1)c_{1}+\epsilon(2)c_{2}\neq0$.
\end{thm}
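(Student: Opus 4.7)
The plan is to mirror the proof of Theorem~3.3, with Lemmas~3.5 and~3.6 replacing Lemmas~3.1 and~3.2, and with a careful adaptation of the Case~2 step that exploits the smallest positive element $\epsilon$.

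For necessity, when $\epsilon(1)c_1+\epsilon(2)c_2=0$, I will show that $\mathcal{U}(L)\cdot t^{-\epsilon}v_L$ is a proper nontrivial $\Z^2$-graded submodule. The key computation is the action of $L_+$ on $t^{-\epsilon}v_L$: every $t^\beta$ with $\beta\succ 0$ commutes with $t^{-\epsilon}$ and then annihilates $v_L$, while for $E(\beta)$ with $\beta\succ 0$ one has $[E(\beta),t^{-\epsilon}]=\text{det}{\beta\choose\epsilon}t^{\beta-\epsilon}-\delta_{\beta,\epsilon}h(-\epsilon)$. Here $t^{\beta-\epsilon}v_L=0$ whenever $\beta-\epsilon\succ 0$, $\text{det}{k\epsilon\choose\epsilon}=0$ for every $k\in\N$, and the $h$-term contributes $\epsilon(1)c_1+\epsilon(2)c_2=0$ by hypothesis. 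Hence $L_+$ annihilates $t^{-\epsilon}v_L$, so $\mathcal{U}(L)\cdot t^{-\epsilon}v_L\subseteq\sum_{\alpha\preceq-\epsilon}M(\mathbf{c},\preceq)_\alpha$, which does not contain $v_L$.

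For sufficiency, assume $\epsilon(1)c_1+\epsilon(2)c_2\neq 0$ and let $0\neq u\in M(\mathbf{c},\preceq)$; the goal is to produce $v_L$ inside $\mathcal{U}(L)u$. Two successive reductions apply: Lemma~3.5 produces some element of $\mathcal{U}(L)u$ with $l=0$, so I may assume $u\in\mathcal{U}(L_-^t)\mathcal{U}(E_\epsilon)v_L$, and then Lemma~3.6 supplies a nonzero element of $\mathcal{U}(L)u\cap\mathcal{U}(L_-^t)v_L$, so ultimately I may assume $u\in\mathcal{U}(L_-^t)v_L$. From here I split into the same two cases as in Theorem~3.3: in Case~1, when every $\alpha\in J(u)$ satisfies $\alpha(1)c_1+\alpha(2)c_2\neq 0$, acting by $E(\alpha_{1_1})\cdots E(\alpha_{1_{s_1}})$ annihilates non-leading terms and sends the leading one to a nonzero multiple of $v_L$, exactly as in the dense setting.

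The main obstacle is Case~2, where some $\alpha\in J(u)$ has $\alpha(1)c_1+\alpha(2)c_2=0$. If the top element $\alpha_{1_{s_1}}$ has nonzero character, then $(E(\alpha_{1_{s_1}}))^b$ with $b$ the multiplicity of $\alpha_{1_{s_1}}$ in $\underline{\alpha_1}$ strictly shrinks $J(u)$. Otherwise $\alpha_{1_{s_1}}\succ\Z\epsilon$, because multiples of $\epsilon$ all have nonzero character, and Lemma~2.3(2) gives $\alpha_{1_{s_1}}\succ n\epsilon$ for every $n\in\N$. Whereas in the dense proof one chooses an arbitrarily small $\eta$ with nonzero character, here I take $\eta=n\epsilon$ with $n$ large enough that both $n\epsilon$ and $\alpha_{1_{s_1}}-n\epsilon$ avoid the finite set $J(u)$. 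Then $\eta(1)c_1+\eta(2)c_2\neq 0$ automatically, and $\text{det}{\alpha_{1_{s_1}}\choose\eta}=n\,\text{det}{\alpha_{1_{s_1}}\choose\epsilon}\neq 0$ since $\alpha_{1_{s_1}}\notin\Z\epsilon$, so $(E(\alpha_{1_{s_1}}-\eta))^b\cdot u$ is nonzero and its $J$-support has strictly fewer bad exponents than $J(u)$: the top bad exponent $\alpha_{1_{s_1}}$ is eliminated and the new exponents introduced, namely $\eta$ together with any error-term shifts of the form $\alpha_{i_j}+\eta-\alpha_{1_{s_1}}$, all lie in $\{\gamma\in G_+:\gamma\preceq n\epsilon\}=\{k\epsilon:1\leq k\leq n\}$ and hence are good. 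Iterating, the number of bad exponents strictly decreases, so the process lands in Case~1 after finitely many steps. The hardest bookkeeping, which I expect to be the main obstacle, is verifying precisely that $\eta=n\epsilon$ plays the role of the ``arbitrarily small $\eta$'' of the dense setting and that each iteration cannot introduce new bad exponents, so that the induction actually terminates.
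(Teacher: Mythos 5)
Your necessity argument and your two opening reductions (via the paper's Lemma 3.4 and Lemma 3.5, which you cite as 3.5 and 3.6) coincide with the paper's proof. The genuine gap is in your Case 2 termination argument. When the top exponent $\beta=\alpha_{1_{s_1}}$ is bad and you apply $(E(\beta-n\epsilon))^{b}$, a factor $E(\beta-n\epsilon)$ is not forced to pair with a $t^{-\beta}$: it interacts nontrivially with every factor $t^{-\mu}$ of $u$ satisfying $\mu\succ\beta-n\epsilon$, and by Lemma 2.3(2) these are exactly the $\mu=\beta-k\epsilon\in J(u)$ with $0\leq k<n$. Whenever $J(u)$ contains some $\beta-k\epsilon$ with $0<k<n$, the output therefore contains surviving monomials in which $t^{-\beta}$ is untouched, so $\beta$ is \emph{not} eliminated from the $J$-support and the number of bad exponents does not decrease. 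Concretely, take $u=t^{-\beta}\bigl(t^{-(\beta-\epsilon)}\bigr)^{2}v_L$ with $\beta$ bad, so $b=1$ and your constraints force $n\geq2$; then $E(\beta-n\epsilon).u$ contains, besides the intended term $\text{det}{\beta-n\epsilon \choose \beta}\,t^{-n\epsilon}\bigl(t^{-(\beta-\epsilon)}\bigr)^{2}v_L$, the nonzero term $2(n-1)\text{det}{\beta \choose \epsilon}\,t^{-\beta}t^{-(\beta-\epsilon)}t^{-(n-1)\epsilon}v_L$. The bad exponent $\beta$ persists (and remains the top element of the new leading sequence), so your induction does not close as stated. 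Choosing $n$ minimal does not help: if $\beta-\epsilon\in J(u)$ you would be forced to $n=1$, and then the central term $h(-(\beta-\epsilon))$ produces $t^{-\beta}t^{-(\beta-\epsilon)}v_L$ with coefficient $2(\epsilon(1)c_1+\epsilon(2)c_2)\neq0$, with the same defect.

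The operator you apply is nevertheless the right one; what strictly decreases is not the count of bad exponents but the quantity the paper calls $l'$, namely the number of factors $t^{-\mu}$ with $\mu\succ\Z\epsilon$ in each monomial: every surviving term of $E(\beta-n\epsilon).u$ has exactly one such factor converted into some $t^{-(n-k)\epsilon}$, while factors $t^{-k\epsilon}$ are killed outright since $\beta-n\epsilon-(-k\epsilon)$ is positive. This is precisely the paper's induction (it applies a single $E(\alpha_{1_{n_1}}-n\epsilon)$ per step, with $n\epsilon\notin J(u)$ guaranteeing nonvanishing of the leading term), and after $l'(u)$ steps one lands in $\mathcal{U}(T_{\epsilon})v_L$, where every exponent is a positive multiple of $\epsilon$ and hence automatically good under the hypothesis $\epsilon(1)c_1+\epsilon(2)c_2\neq0$; your Case 1 computation then finishes. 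So the proof is repaired by replacing your induction measure with $l'$, after which it essentially coincides with the paper's argument; the Case 1/Case 2 dichotomy imported from the dense setting is not needed in the discrete case.
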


\begin{proof}
If $\epsilon(1)c_{1}+\epsilon(2)c_{2}=0$, it is clear that the $L$-submodule of $M(\mathbf{c}, \preceq)$, generated by
the element $t^{-\epsilon}.v_L$, is a proper submodule of $M(\mathbf{c}, \preceq)$.
Thus $M(\mathbf{c}, \preceq)$ is reducible. Now, to complete the proof of this theorem, we only need to show that
$M(\mathbf{c}, \preceq)$ is irreducible if $\epsilon(1)c_{1}+\epsilon(2)c_{2}\neq0$.
By applying Lemma $3.4$ and $3.5$, we see that, for any $0\neq u\in M(\mathbf{c}, \preceq)$, there exists an element $0\neq u^{\prime}\in\mathcal{U}(L)u\cap\mathcal{U}(L^{t}_{-})v_L$.
Thus without loss of generality, we may assume that $0\neq u\in\mathcal{U}(L^{t}_{-})v_L$.
Write$$u=\sum\limits_{i=1}^{s}T_{i}t^{-\alpha_{i_{1}}}t^{-\alpha_{i_{2}}}\cdots
t^{-\alpha_{i_{n_{i}}}}v_L,$$
where $\Z \epsilon\prec\alpha_{i_{1}}\preceq\alpha_{i_{2}}\preceq\cdots\preceq\alpha_{i_{n_{i}}}$
and $0\neq T_{i}\in\mathcal{U}(T_{\epsilon})$ for $1\leq i\leq s$.
Let
$\underline{\a_{i}}=(\alpha_{i_{1}}, \alpha_{i_{2}}, \cdots, \alpha_{_{i_{n_{i}}}})$ for $1\leq i\leq s$.
We may assume $\underline{\a_{1}}=\text{max}\{\underline{\a_{i}}\mid1\leq i\leq s\}$
and $a$ is the number of the element $\alpha_{_{1_{n_{1}}}}$ which occurs in the sequence $\underline{\a_{1}}$ for $a$ times.
Note that if $l^{\prime}(u)=0$, we have $0\neq u\in\mathcal{U}(T_{\epsilon})v_L$. And
if $l^{\prime}(u)\geq1$, we claim that there exists $0\neq w\in\mathcal{U}(L)u\cap\mathcal{U}(T_{\epsilon})v_L$.
In fact, $\alpha_{1_{n_{1}}}=\text{max}(J(u))$ and $J(u)$ is a finite set,
then $\alpha_{1_{n_{1}}}\succ\Z\epsilon$ and
$\text{det}{\alpha_{_{1_{n_{1}}}} \choose \epsilon}\neq0$. Therefore we can
choose $n\in\N$ such that $n\epsilon\notin J(u)\cap\Z\epsilon$. Consider $E(\alpha_{_{1_{n_{1}}}}-n\epsilon).u$.
Then it is straightforward to check that
$$E(\alpha_{_{1_{n_{1}}}}-n\epsilon).u=a\text{det}{\alpha_{1_{n_{1}}} \choose n\epsilon}T_{1}t^{-\alpha_{1_{1}}}t^{-\alpha_{1_{2}}}\cdots t^{-\alpha_{1_{n_{1}-a}}}t^{-n\epsilon}(t^{-\alpha_{1_{n_{1}}}})^{a-1}v_L+A,$$
and one can deduce that $A$ is linearly independent with $$a\text{det}{\alpha_{1_{n_{1}}} \choose n\epsilon}
T_{1}t^{-\alpha_{1_{1}}}t^{-\alpha_{1_{2}}}\cdots t^{-\alpha_{1_{n_{1}-a}}}t^{-n\epsilon}(t^{-\alpha_{1_{n_{1}}}})^{a-1}v_L$$
if $A\neq0$. And it is clear
that $l^{\prime}(E(\alpha_{_{1_{n_{1}}}}-n\epsilon).u)=l^{\prime}(u)-1$. Repeating the above process, we obtain that there exists an element
$$0\neq w\in\mathcal{U}(L)u\cap\mathcal{U}(T_{\epsilon})v_L.$$
This implies that, for any $0\neq u\in\mathcal{U}(L^{t}_{-})v_L$, there exists an element $w$ such that  $0\neq w\in\mathcal{U}(L)u\cap\mathcal{U}(T_{\epsilon})v_L$.
Let $$w=\sum\limits_{i=1}^{m}a_{i}t^{-i_{1}\epsilon}t^{-i_{2}\epsilon}\cdots t^{-i_{s_{i}}\epsilon}v_L,$$
where $0\neq a_{i}$, $1\leq i_{1}\leq i_{2}\leq\cdots\leq i_{s_{i}}$ for $1\leq i\leq m$. Assume
$$(m_{1}\epsilon, m_{2}\epsilon,\cdots, m_{s_{m}}\epsilon)=
\text{max}\{(m_{1}\epsilon, m_{2}\epsilon,\cdots, m_{s_{m}}\epsilon)\mid1\leq i\leq m\}.$$
Then it is easy to verify that
$$0\neq E(m_{1}\epsilon)E(m_{2}\epsilon)\cdots E(m_{s_{m}}\epsilon).w\in \C v_L,$$
i.e., $v_L\in\mathcal{U}(L)w$.

The above argument implies that
$v_L\in \mathcal{U}(L)u$ for any $0\neq u\in M(\mathbf{c}, \preceq)$, i.e.,
$M(\mathbf{c}, \preceq)$ is irreducible as required.
\end{proof}

Finally, we would like to determine the maximal submodules of the Verma module $M(\mathbf{c}, \preceq)$ for the rank-two Heisenberg-Virasoro algebra $L$ in case $M(\mathbf{c}, \preceq)$ is reducible. We will divide the arguments into two cases. For the case that the compatible total order $\preceq$ on $G$ is dense, we need certain results obtained in [WZ], and for the case that $\preceq$ is discrete, we apply a result from [TWX].

\begin{prop}
Suppose that the compatible total order $\preceq$ is dense, and $(c_{1}, c_{2})=(0, 0)$. Then

(1) If $(c_{3}, c_{4})=(0, 0)$,  the submodule generated by the element $E(-\gamma)v_L$, for
any $\gamma\succ0$, is the unique maximal $\Z^{2}$-graded submodule of the Verma module $M(\mathbf{c}, \preceq)$.

(2) If $(c_{3}, c_{4})\neq(0, 0)$, the submodule generated by the
set $\{t^{-\alpha}v_L\mid \alpha\succ0\}$ is a maximal submodule of the Verma module $M(\mathbf{c}, \preceq)$.
\end{prop}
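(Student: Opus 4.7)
The plan is to identify the maximal submodule in each case by linking it to the Virasoro-like subalgebra $\mathcal V \subset L$ spanned by $\{E(\alpha), K_3, K_4 \mid \alpha \in G\setminus\{0\}\}$, and invoking the corresponding results from \cite{WZ}. Throughout, write $W := \bigoplus_{\alpha \prec 0} M(\mathbf{c},\preceq)_\alpha$, so that $M(\mathbf{c},\preceq) = \C v_L \oplus W$ as $\Z^2$-graded vector spaces and every proper $\Z^2$-graded submodule lies inside $W$.

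For part (1), under the hypothesis $(c_1,c_2) = (c_3,c_4) = (0,0)$, both central elements $h(\alpha)$ and $f(\alpha)$ annihilate $M(\mathbf c,\preceq)$. A direct commutator expansion shows that the $L_+$-action on any PBW monomial in $W$ produces no $\C v_L$ component: each resulting term either vanishes (a raising operator eventually reaches $v_L$, or an $h/f$-delta coefficient is zero) or remains of negative degree. Hence $W$ is already an $L$-submodule, and it suffices to show $V_\gamma := \mathcal U(L) E(-\gamma) v_L = W$ for every $\gamma \succ 0$. Restriction to $\mathcal V$ exhibits $\mathcal U(\mathcal V) v_L \subset M(\mathbf c,\preceq)$ as the Virasoro-like Verma module with trivial central charges; the \cite{WZ} result identifies its unique maximal $\Z^2$-graded $\mathcal V$-submodule as $\mathcal U(\mathcal V) E(-\gamma) v_L$, which contains every $E(-\beta_1)\cdots E(-\beta_n) v_L$ with $n\ge 1$, so all such elements lie in $V_\gamma$. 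To adjoin $t^{-\delta} v_L$, use $t^\alpha\cdot E(-\beta) v_L = -\text{det}{\beta \choose \alpha}\, t^{\alpha-\beta}v_L$ (the $h$-delta term vanishes by $(c_1,c_2)=0$): given $\delta\succ 0$, set $\beta = \delta+\alpha$ and choose $\alpha\succ 0$ with $\text{det}{\delta \choose \alpha}\neq 0$ using Lemma 2.2(3). Finally, since the $t^{-\alpha_i}$'s mutually commute and precede the $E$'s in the standard PBW ordering, acting by $t^{-\alpha_1}\cdots t^{-\alpha_m}$ on $E(-\beta_1)\cdots E(-\beta_n)v_L \in V_\gamma$ (or by $t^{-\alpha_1}\cdots t^{-\alpha_{m-1}}$ on $t^{-\alpha_m}v_L \in V_\gamma$) produces every PBW monomial of $W$ directly, with no commutator corrections. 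Thus $V_\gamma = W$.

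For part (2), with $(c_1,c_2) = (0,0)$ and $(c_3,c_4)\neq(0,0)$, set $V' := \mathcal U(L)\langle t^{-\alpha}v_L \mid \alpha \succ 0\rangle$ and analyse the quotient. An induction on the number of $E$-factors, using $[E(-\beta),t^{-\alpha}] = -\text{det}{\alpha \choose \beta}\, t^{-\alpha-\beta}$ (the $h$-delta term is absent since $\alpha+\beta\neq 0$), shows that $V'$ contains every mixed PBW monomial $t^{-\alpha_1}\cdots t^{-\alpha_m}E(-\beta_1)\cdots E(-\beta_n)v_L$ with $m\ge 1$: the inductive step captures the target as the leading term of $E(-\beta_1)\cdots E(-\beta_n)\cdot t^{-\alpha_1}\cdots t^{-\alpha_m}v_L$, all other terms in the commutator expansion having strictly fewer $E$-factors and hence being covered by the inductive hypothesis. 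Consequently, in $M(\mathbf c,\preceq)/V'$ the pure-$E$ monomials form a basis. Moreover $t^\alpha$ acts trivially on this quotient for every $\alpha\neq 0$: for $\alpha\prec 0$ we have $t^\alpha v_L\in V'$, and for $\alpha\succ 0$ each commutator produced as $t^\alpha$ is moved rightward past the $E(-\beta_i)$'s either vanishes (from $t^\alpha v_L=0$ or a vanishing $h$-delta) or contributes a negative-degree $t$-factor, giving a mixed monomial already in $V'$. The $L$-action on $M(\mathbf c,\preceq)/V'$ therefore factors through $\mathcal V$ and identifies the quotient with the Virasoro-like Verma module at charges $(c_3,c_4)$, which is irreducible by the analog of Theorem 3.3 from \cite{WZ} since $(c_3,c_4)\neq(0,0)$. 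Hence $V'$ is a maximal submodule.

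The main technical obstacle I foresee is the inductive PBW bookkeeping of part (2), where one must confirm that the targeted new monomial appears with nonzero coefficient in $E(-\beta_1)\cdots E(-\beta_n)\cdot t^{-\alpha_1}\cdots t^{-\alpha_m}v_L$ and is not cancelled by the commutator corrections. This is handled by fixing a lexicographic order on PBW monomials compatible with $\preceq$ and checking strict triangularity, much in the spirit of Lemmas 3.1 and 3.2; the same viewpoint also streamlines the verification that $W$ is an $L$-submodule in part (1).
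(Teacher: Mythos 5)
Your proof is correct and follows essentially the same route as the paper's: in case (1) you identify the maximal graded submodule with the whole negative part $\sum_{\a\prec 0}M(\mathbf{c},\preceq)_{\a}$ and extract $t^{-\d}v_L$ from $E(-\d-\a)v_L$ by a single $t^{\a}$-commutator exactly as the paper does, and in case (2) you identify the submodule generated by the $t^{-\a}v_L$ with the span of PBW monomials containing a $t$-factor and quote the irreducibility of the Virasoro-like Verma module at nonzero central charge from \cite{WZ}, which is precisely the paper's appeal to Theorem 3.1 of \cite{WZ}. The one divergence is that in case (1) you cite \cite{WZ} for the fact that the submodule generated by a single $E(-\gamma)v_L$ already contains every $E(-\b_1)\cdots E(-\b_n)v_L$, whereas the paper derives this explicitly via the chain $E(-\gamma)v_L \rightsquigarrow E(-\gamma')v_L \rightsquigarrow E(-(\gamma'+n\gamma))v_L \rightsquigarrow E(-\b)v_L$; that explicit argument is worth keeping at hand in case the statement is not available in \cite{WZ} in exactly this form.
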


\begin{proof}
For (1), since the order $\preceq$
is dense on $G$, by Lemma $2.2(1)$ we know that there exists an element $\gamma^{\prime}\succ0$ such that
$\gamma^{\prime}\prec\gamma$ and
$\text{det}{\gamma^{\prime} \choose \gamma}\neq0$. Thus, from the two identities
$$E(-\gamma^{\prime})v_L=(\text{det}{\gamma \choose \gamma^{\prime}})^{-1}
E(\gamma-\gamma^{\prime})E(-\gamma)v_L,$$
and
$$\text{det}{\gamma^{\prime} \choose \gamma}
E(-(\gamma^{\prime}+\gamma))v_L=E(-\gamma)
E(-\gamma^{\prime})v-E(-\gamma^{\prime})E(-\gamma)v_L,$$
we have $E(-\gamma^{\prime})v_L,\ E(-(\gamma^{\prime}+\gamma))v_L\in \mathcal{U}(L)E(-\gamma)v_L$.
Then from the identity
$$\text{det}{\gamma^{\prime} \choose \gamma}
E(-(\gamma^{\prime}+(n+1)\gamma))v_L=E(-\gamma)
E(-(\gamma^{\prime}+n\gamma))v_L-E(-(\gamma^{\prime}+n\gamma))
E(-\gamma)v_L,$$ and, by induction on $n\in \Z_+$, one sees that
$E(-(\gamma^{\prime}+n\gamma))v_L\in \mathcal{U}(L)E(-\gamma)v_L$ for any
$n\in \Z_{+}$. By Lemma $2.2(2)$, we know that, for any $\b\in G_{+}$, there exists $n\in \Z_+$ such that
$\gamma^{\prime}+(n+1)\gamma\succ\gamma^{\prime}+n\gamma\succ\b$
for some $n\in \Z_{+}$. Hence one of the two
determinants $\text{det}{\gamma^{\prime}+n\gamma \choose\b}$ and
$\text{det}{\gamma^{\prime}+(n+1)\gamma \choose\b}$ is not equal to zero, say
$\text{det}{\gamma^{\prime}+n\gamma \choose\b}\neq0$. Note that
$$\text{det}{-\gamma^{\prime}-n\gamma \choose -\b}E(-\b)v_L=E(-\b+\gamma^{\prime}
+n\gamma)E(-(\gamma^{\prime}+n\gamma))v_L.$$
Thus we have $E(-\b)v_L\in \mathcal{U}(L)E(-\gamma)v_L$ for any $\b\in G_{+}$. By assumption, for any
$\d\succ0$, there exists $\alpha\in G_{+}$ such that $\d\prec\alpha\prec 2\d$ and
$\text{det}{\d \choose \alpha}\neq0$. Hence
$$t^{\alpha}E(-\d-\alpha)v_L=\text{det}{-\d \choose \alpha}t^{-\d}v_L,$$
which implies that $t^{-\d}v_L\in \mathcal{U}(L)E(-\gamma)v_L$ for any $\d\in G_{+}$.
Moreover, since $(c_{3}, c_{4})=(0,0)$, it is easy to check that
$$\mathcal{U}(L)E(-\gamma)v_L=\sum\limits_{\b\in G_{-}}M(\mathbf{c}, \preceq)_{\b}.$$ Therefore, the submodule
 generated by $E(-\gamma)v_L$ is the unique maximal $\Z^{2}$-graded submodule of $M(\mathbf{c}, \preceq)$. This finishes the proof of the first part of the proposition.

For the second statement (2), since $(c_{3}, c_{4})\neq(0, 0)$, it is clear that the submodule
 generated by the set $\{t^{-\alpha}v_L\mid \alpha\succ0\}$ is
$\mathcal{U}(L^{E}_{-})\mathcal{U}(L^{t}_{-})L^{t}_{-}v_L$. By applying Theorem $3.1$ in the paper \cite{WZ}, we  see
that $\mathcal{U}(L^{E}_{-})\mathcal{U}(L^{t}_{-})L^{t}_{-}v_L$ is a maximal submodule of $M(\mathbf{c}, \preceq)$, i.e.,
the submodule generated by the
set $\{t^{-\alpha}v_L\mid \alpha\succ0\}$ is  maximal as required.
\end{proof}

Now we assume that the order is discrete, and before state our result we need to introduce some more symbols.
Since $\epsilon,\epsilon^\prime$ form a $\Z$-basis of $G$, then for any $\alpha\in G$, we have
$\alpha=\alpha[1]\epsilon+\alpha[2]\epsilon^{\prime}$, for some $\alpha[1],\alpha[2]\in\Z$. Set $E=\sum\limits_{k\in \Z\backslash\{0\}}\C E(k\epsilon)$,
$H_{\pm}=\sum\limits_{\pm k\in \N}(\C t^{k\epsilon}\oplus \C E(k\epsilon))$, and $H=H_{+}\oplus L_{0}\oplus H_{-}$. For the sake of convenience, we denote by $X(\alpha)$ the element $t^{\alpha}$ or $E(\alpha)$ for $\alpha\in G_{+}$.  Let $$\begin{aligned}
B=\{1,t^{-\beta_{1}}\cdots t^{-\beta_{m}},E(-\alpha_{1})\cdots E(-\alpha_{n}),
t^{-\beta_{1}}\cdots t^{-\beta_{m}}E(-\alpha_{1})\cdots E(-\alpha_{n})\mid \\
m,n\in\N,\ 0\preceq\alpha_{n}\preceq\cdots\preceq\alpha_{1},\ 0\preceq\beta_{m}\preceq\cdots\preceq\beta_{1}\}
\end{aligned}$$be a basis of $\mathcal{U}(L_{-})$, and $M(H)=\mathcal{U}(H)v_L$. For $h\in\N$, we set
$$B_{H}(h)=\{X(-\alpha_{1})\cdots X(-\alpha_{n})\in B\mid n\in\N,
\alpha_{i}\in G_{+}\backslash \Z\epsilon, \sum_{i=1}^{n}\alpha_{i}[2]=h\},$$
and set $B_{H}(0)=\{1\}$.
Let $M(h)=B_{H}(h)M(H)$ for $h\in \Z_{+}$. For any pair
$(a, b)=((a_{i})_{i\in \N}, (b_{i})_{i\in \N})\in \C^{\N}\times\C^{\N}$,
let $I_{ab}$ denote the ideal of $\mathcal{U}(H_{-})$ generated by $\{t^{-i\epsilon}-a_{i}, E(-i\epsilon)-b_{i}\mid i\in\N\}$.
For any $\xi=(\xi_{i})_{i\in\N}\in\C^{\N}$, let $J_{\xi}$ denote the ideal of $\mathcal{U}(T_{\epsilon})$
generated by $\{t^{-i\epsilon}-\xi_{i}\mid i\in\N\}$. Then we have the following
statement.

\begin{prop}
Suppose that the compatible total order $\preceq$ is discrete on $G$, and assume that $\epsilon(1)c_{1}+\epsilon(2)c_{2}=0$. Then

(1) If $\epsilon(1)c_{3}+\epsilon(2)c_{4}=0$,  $\{M_{ab}\}$ is the set of all
maximal $\Z^{2}$-graded submodules of the Verma module $M(\mathbf{c}, \preceq)$, where
$(a, b)=((a_{i})_{i\in \N}, (b_{i})_{i\in \N})\in
\C^{\N}\times\C^{\N}$, and $M_{ab}=\sum\limits_{h\in \Z_{+}}M_{ab}(h)$ with
$M_{ab}(h)=\{u\in M(h)\mid X(\epsilon^{\prime}+i_{1}\epsilon)\cdots X(\epsilon^{\prime}+i_{h}\epsilon)u\in I_{ab}v_L,\
\forall i_{1}, \cdots, i_{h}\in \Z\}$.

(2) If $\epsilon(1)c_{3}+\epsilon(2)c_{4}\neq0$, $\{M_{\xi}\mid \xi\in\C^{\N}\}$ is the set of all
maximal $\Z^{2}$-graded submodules of the Verma module $M(\mathbf{c}, \preceq)$, where $M_{\xi}=\sum\limits_{h\in\Z_{+}}M_{\xi}(h)$ with
$M_{\xi}(h)=\{u\in M(h)\mid X(\epsilon^{\prime}+i_{1}\epsilon)\cdots X(\epsilon^{\prime}+i_{h}\epsilon)u\in \mathcal{U}(E)J_{\xi}v_L,\
\forall i_{1}, \cdots, i_{h}\in \Z\}$.
\end{prop}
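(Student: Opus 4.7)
My plan is to grade $M(\mathbf{c}, \preceq) = \bigoplus_{h \ge 0} M(h)$ by the $\epsilon'$-degree and reduce the classification of maximal $\Z^2$-graded submodules to an analysis of $M(H) = M(0)$ together with a uniform lifting prescription. Since $\alpha \in G_+ \setminus \Z\epsilon$ forces $\alpha[2] \ge 1$ by Lemma 2.3(2), each generator $X(-\alpha)$ with $\alpha \in G_+\setminus\Z\epsilon$ strictly decreases the $\epsilon'$-degree while operators from $H$ preserve it; hence $M(h) = B_H(h)M(H)$ coincides with the $\epsilon'$-degree $-h$ subspace. The raising operators $X(\epsilon'+k\epsilon)$, $k\in\Z$, all have $\epsilon'$-degree $+1$, so $h$ applications send $M(h)$ into $M(H)$.

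I would first analyze $M(H)$ as an $H$-module. The determinants $\det\binom{i\epsilon}{j\epsilon}$ vanish, so $[H_+, H_-]$ reduces to central corrections; combined with $\epsilon(1)c_1+\epsilon(2)c_2 = 0$, the operator $t^{k\epsilon}$ (for $k > 0$) acts as zero on all of $M(H)$. In Case (1), the hypothesis $\epsilon(1)c_3+\epsilon(2)c_4 = 0$ kills $E(k\epsilon)$ too, so $M(H)$ is free of rank $1$ over the polynomial algebra $\mathcal{U}(H_-) = \C[t^{-i\epsilon}, E(-i\epsilon) : i\in\N]$ with the entire $H_+$ acting trivially; the maximal $H$-submodules are then the kernels $I_{ab}v_L$ of the evaluation characters $t^{-i\epsilon}\mapsto a_i$, $E(-i\epsilon)\mapsto b_i$. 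In Case (2), the surviving Heisenberg bracket $[E(k\epsilon), E(-k\epsilon)] = k(\epsilon(1)c_3+\epsilon(2)c_4) \ne 0$ makes $\mathcal{U}(E)v_L$ irreducible as a Heisenberg module, while $\mathcal{U}(T_\epsilon)$ still acts by free polynomial multiplication; the maximal subspaces of $M(H)$ compatible with $\Z^2$-gradedness of the lift to $M(\mathbf{c}, \preceq)$ are then the $\mathcal{U}(E)J_\xi v_L$ for $\xi\in\C^\N$.

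Given such a maximal subspace $N_0\subset M(H)$, I would define its lift as in the proposition and verify: (a) $N$ is $\Z^2$-graded, for which the key point is that as $(i_1,\ldots,i_h)\in\Z^h$ varies the images $X(\epsilon'+i_1\epsilon)\cdots X(\epsilon'+i_h\epsilon) u_\alpha$ of distinct weight components land in distinct weight spaces of $M(H)$, and the defining $\forall$-condition combined with the character $\chi_{ab}$ (resp.\ the Heisenberg decomposition in Case (2)) forces each weight component $u_\alpha$ of $u$ to satisfy the condition separately; (b) $N$ is $L$-stable, by induction on $h$: commuting an arbitrary $Y\in L$ past the raising string yields a principal term that lands in $N_0$ together with strictly-lower-$h$ error terms controlled by the inductive hypothesis; (c) $N\cap M(H) = N_0$ by construction at $h=0$; (d) maximality, since any strictly larger $\Z^2$-graded submodule intersects $M(H)$ strictly larger than $N_0$, forcing $v_L$ to lie in it. Conversely, for any maximal $\Z^2$-graded submodule $N'$, the intersection $N'\cap M(H)$ is a proper $\Z^2$-graded $H$-submodule of $M(H)$; the analysis above---together with the Whittaker-vector result of [TWX] on one-dimensional $H$-quotients in Case (2)---identifies this intersection with one of our $N_0$, and the maximality of $N'$ forces it to coincide with the corresponding lift.

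The main obstacle is step (b), verifying $L$-stability of the lift. When $Y\in L$ has negative $\epsilon'$-degree $-s<0$, the image $Yu\in M(h+s)$ must be tested by $h+s$ raising operators rather than $h$, so one cannot just absorb $Y$ into the raising string. Commuting $Y$ through the longer string produces lower-order corrections that recurse onto the inductive hypothesis, together with central-Heisenberg-type contributions whose precise cancellation into $N_0$ is where the explicit parameters $(a,b)$ or $\xi$ enter decisively. In Case (2) the combinatorics of these central contributions is controlled exactly by the Whittaker-module computations of [TWX], which I expect to be indispensable for closing this step.
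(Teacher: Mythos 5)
Your outline points in a sensible direction, but as it stands it is not a proof: the decisive steps are left open. In your step (b) you acknowledge that the $L$-stability of the lifted subspaces $M_{ab}$ and $M_{\xi}$ depends on ``central contributions whose precise cancellation'' you do not carry out, and you close by saying you \emph{expect} the computations of [TWX] to be indispensable; the converse direction (that every maximal graded submodule arises from your list) likewise invokes an unspecified ``Whittaker-vector result of [TWX].'' Deferring exactly the hard parts to a reference you have not pinned down is a gap, not an argument. There is also a concrete problem in your step (a): you argue that the $\forall$-condition forces each weight component $u_{\alpha}$ of $u$ into the defining set separately because distinct components land in distinct weight spaces of $M(H)$; but $I_{ab}v_L$ is not a weight-graded subspace of $M(H)$ (if $a_{1}\neq 0$, then $(t^{-\epsilon}-a_{1})v_L\in I_{ab}v_L$ has weight components $t^{-\epsilon}v_L$ and $-a_{1}v_L$, and if both lay in $I_{ab}v_L$ then $v_L$ would, forcing $I_{ab}=\mathcal{U}(H_{-})$, which is false). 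So membership of a sum in $I_{ab}v_L$ does not pass to its weight components, and your gradedness argument does not close; the gradedness that your construction actually respects is the coarser one coming from the decomposition $M=\sum_{h}M(h)$ by the $\epsilon^{\prime}$-degree $h$, i.e.\ the $\Z^{2}/\Z\epsilon$-grading.

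The paper's own proof is far shorter and bypasses all of this: it observes that the Verma module $M(\mathbf{c},\preceq)$ \emph{is} the universal Whittaker module $M_{\varphi,k_{1},k_{2},k_{3},k_{4}}$ of [TWX] specialized at $\varphi=0$, so the classification of maximal submodules in both cases is exactly Propositions $4.3$ and $4.5$ of [TWX], quoted verbatim. Your preliminary analysis of $M(H)$ --- that $t^{k\epsilon}$ with $k>0$ annihilates $M(H)$ when $\epsilon(1)c_{1}+\epsilon(2)c_{2}=0$; that in case (1) $M(H)$ is a free rank-one module over the polynomial algebra $\mathcal{U}(H_{-})$ with maximal submodules the $I_{ab}v_L$; and that in case (2) it is $\mathcal{U}(T_{\epsilon})$ tensored with an irreducible Fock module, with maximal submodules the $\mathcal{U}(E)J_{\xi}v_L$ --- is correct and is essentially the first layer of the [TWX] argument. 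But to complete your route you would have to reprove the lifting machinery of [TWX] (your steps (b)--(d) and the converse), rather than gesture at it; the efficient move is simply to recognize the Verma module as the $\varphi=0$ Whittaker module and cite those results directly.
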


\begin{proof}
We take $\varphi=0$ in the definition of the universal Whittaker module $M_{\varphi, k_{1}, k_{2}, k_{3}, k_{4}}$ defined in \cite{TWX}, then
the universal Whittaker modules are exactly the Verma modules for the rank-two Heisenberg-Virasoro algebra $L$. Therefore the statement of this proposition follows from Proposition $4.3$ and Proposition $4.5$ in \cite{TWX}.
\end{proof}


\begin{thebibliography}{BGK}
\bibitem[ACKP]{ACKP}
E. Arbarello, C. De Concini, V.G. Kac and C. Procesi, Moduli spaces of curves and representation theory,
{\em Comm. Math. Phys.} {\bf 117} (1988), 1-36.
\bibitem[B]{B} Y. Billy, Representations of the twisted Heisenberg-Virasoro algebra at level zero,
{\em Canad. Math. Bulletin} {\bf 46} (2003), 529-537.
\bibitem[GL1]{GL1} X. Guo and X. Liu, Whittaker modules over Virasoro-like algebra,
{\em J. Math. Phys.} {\bf 52} (2011), 093504.
\bibitem[GL2]{GL2} X. Guo and G. Liu, Jet modules for the centraless Virasoro-like algebra, {\em J. Alg. Appl.} (2017), https://doi.org/10.1142/S021949881500026.
\bibitem[GW]{GW} H. Guo and Q. Wang, Twisted Heisenberg-Virasoro vertex operator algebra,
{\em arXiv: 1612.06991.}
\bibitem[HWZ]{HWZ} J. Hu, X. Wang and K. Zhao, Verma modules over generalized Virasoro algebras Vir$[G]$,
{\em J. Pure Appl. Alg.} {\bf 177} (2003), 61-69.
\bibitem[JJ]{JJ} Q. Jiang and C. Jiang, Representations of the twisted Heisenberg-Virasoro algebra and the full toroidal Lie algebras,
{\em Alg. Colloq.} {\bf 14} (2007), 117-134.
\bibitem[JL]{JL} J. Jian and W. Lin, Partial classification of cuspidal simple modules for Virasoro-like algebra,
{\em J. Alg.} {\bf 464} (2016), 266-278.
\bibitem[LTW]{LTW} Z. Li, S. Tan and Q. Wang, On Harish-Chandra modules of the Lie algebra arising from the $2$-Dimensional Torus, {\em arXiv: 1801. 05592.}
\bibitem[M]{M} V. Mazorchuk, Verma modules over generalized Witt algebras,
{\em Compositio Math.} {\bf 155} (1999), 21-35.
\bibitem[SJ]{SJ} R. Shen and C. Jiang, Derivation algebra and automorphism group of the twisted Heisenberg-Virasoro algebra,
{\em Comm. Alg.} {\bf 34} (2006), 2547-2558.
\bibitem[SJS]{SJS} R. Shen, Q. Jiang and Y. Su, Verma modules over the Generalized
Heisenberg-Virasoro algebra, {\em Comm. Alg.} {\bf 36} (2008), 1464-1473.
\bibitem[TWX]{TWX} S. Tan, Q. Wang and C. Xu, On Whittaker modules for a Lie algebra arising from the 2-dimensional torus,
{\em Pacific J. Math.} {\bf 273} (2015), 147-167.
\bibitem[WZ]{WZ} X. Wang and K. Zhao, Verma modules over Virasoro-like algebras,
{\em J. Aust. Math. Soc.} {\bf 80} (2006), 179-191.
\bibitem[XLT]{XLT} M. Xue, W. Lin and S. Tan, Central extension, derivations and automorphism group for
Lie algebras arising from $2$-dimensional torus, {\em J. Lie Theory} {\bf 16} (2006), 139-153.

\end{thebibliography}
\end{document}